\theoremstyle{plain}\newtheorem{Theorem}{Theorem}[section]
\theoremstyle{plain}
\theoremstyle{plain}
\theoremstyle{plain}\newtheorem{Lemma}[Theorem]{Lemma}
\theoremstyle{plain}\newtheorem{Proposition}[Theorem]{Proposition}
\theoremstyle{definition}
\theoremstyle{definition}
\theoremstyle{definition}
\theoremstyle{definition}\newtheorem{Remark}[Theorem]{Remark}
\theoremstyle{definition}
\theoremstyle{plain}\newtheorem{Equation}[Theorem]{}
\def\CB{{\mathcal{B}}}      
\def\CC{{\mathcal{C}}}    
\def\CD{{\mathcal{D}}}    
\def\CF{{\mathcal{F}}}    
\def\CG{{\mathcal{G}}}
\def\CO{{\mathcal{O}}}
\def\bS{{\mathbb{S}}}
\def\bT{{\mathbb{T}}}
           \def\tenk{\otimes_k}     
             \def\ten{\otimes} \def\tenR{\otimes_R}
\def\chr{\mathrm{char}}
\def\dim{\mathrm{dim}}           
\def\End{\mathrm{End}}           
\def\Endbar{\underline{\mathrm{End}}}
\def\Ext{\mathrm{Ext}}           
\def\hatExt{\widehat{\mathrm{Ext}}} 
\def\barExt{\overline{\mathrm{Ext}}}
\def\HH{H\!H}
\def\hatHH{\widehat{H\!H}} \def\barHH{\overline{H\!H}}
\def\Hom{\mathrm{Hom}}           
\def\Hombar{\underline{\mathrm{Hom}}}
\def\Id{\mathrm{Id}}             \def\tenA{\otimes_A}
             \def\tenB{\otimes_B}
           \def\tenK{\otimes_K}
           \def\tenK{\otimes_K}
\def\Mod{\mathrm{Mod}}           \def\tenO{\otimes_{\mathcal{O}}}
\def\mod{\mathrm{mod}}      \def\modbar{\underline{\mathrm{mod}}}
\def\op{\mathrm{op}}
\def\perf{\mathrm{perf}}    \def\perfbar{\underline{\mathrm{perf}}}
\def\pr{\mathrm{pr}}
\def\rk{\mathrm{rk}}
\def\RHom{\mathrm{RHom}}
\def\tr{\mathrm{tr}}             
 \def\trace{\mathrm{trace}}
\title[Tate duality and transfer over discrete valuation rings]{Tate duality  and
transfer for symmetric algebras
over complete discrete valuation rings} 
\author{Markus Linckelmann} 
\date{}
\begin{document}

\begin{abstract}
We show that dualising transfer maps in Hochschild cohomology
of symmetric algebras over complete discrete valuations rings commutes 
with Tate duality. 
This is analogous to a similar result for Tate cohomology of symmetric algebras
over fields. We interpret both results in the broader context of Calabi-Yau
triangulated categories.
\end{abstract}

\maketitle

\section{Introduction}

An algebra $A$ over a commutative ring $R$ is called {\em symmetric} if it is
finitely generated projective as  an $R$-module and if $A\cong A^\vee$ as
$A$-$A$-bimodules, where $A^\vee=\Hom_R(A,R)$. In that case, the image
$s$ in $A^\vee$  of $1_A$ under such a bimodule isomorphism is called a 
{\em symmetrising form for $A$}. The form $s$ depends on the choice of
the isomorphism $A\cong A^\vee$, and is unqiue up to multiplication by
an invertible element in $Z(A)$. There may not be a canonical choice
for $s$.  If $G$ is a finite group, then $RG$ is symmetric and - keeping track of
the image of $G$ in $RG$ - does have a canonical
symmetrising form, namely the map $s$ sending $1_G$ to $1_R$ and all nontrivial
group elements to $0$.

\medskip
For a symmetric algebra $A$ over a field, Tate duality
is a duality between the Tate-Ext spaces
$\hatExt_A^{n-1}(U,V)$ and $\hatExt_A^{-n}(V,U)$, for any integer $n$ and any
two finite-dimensional $A$-modules $U$, $V$. 
In particular, this yields a duality 
between  Tate-Hochschild cohomology $\hatHH^{n-1}(A)$ and $\hatHH^{-n}(A)$ 
for all integers $n$.
It is shown  in \cite{LiHHtt} that in that case Tate duality commutes with the 
transfer maps introduced in \cite{Lintransfer}, extending a well-known
compatibility of Tate duality with restriction and transfer in finite group
cohomology. 

\medskip
 If $A$ is instead a symmetric  algebra over a complete discrete valuation ring 
 $\CO$ with a separable coefficient extension $K\tenO A$ to the field of fractions
 $K$ of $\CO$, and if $U$, $V$ are $\CO$-free
finitely generated $A$-modules, then Tate duality takes a  different form:
there is a non-degenerate bilinear form 
$$\langle - , - \rangle_A : \hatExt_A^{n}(U,V)\times \hatExt_A^{-n}(V,U)
\to K/\CO$$ 
for any integer $n$, which is described explicitly in \cite{EGKL}
and briefly reviewed in \ref{Tatedegree0-3} below.
The purpose of this paper is to show that  this duality
 commutes with the transfer maps from \cite{Lintransfer}. 
The proof is quite different from that in \cite{LiHHtt} due to the different
description of Tate duality, as given in \cite{EGKL}, extending the description in 
Th\'evenaz 
\cite[\S 33]{Thev} for finite group algebras. Both this duality as well as
the transfer maps depend on the choices of symmetrising forms. By omitting
choices of symmetrising forms from the statements below we implicitly assert
that these statements hold regardless of these choices.

\medskip
If $A$, $B$ are two symmetric $\CO$-algebras 
and $M$ is an $A$-$B$-bimodule 
which is finitely generated as a left $A$-module and as a right 
$B$-module, then, for any two finitely generated $\CO$-free
$A$-modules $U$, $V$, there is a transfer map 
$$\tr_{M}(U,V) : \hatExt_B^n(M^\vee\tenA U,M^\vee\tenA V)\to
\hatExt_A^n(U,V),$$
which we will  review in Section \ref{transferSection}. 
In degree zero, this is the trace map defined in \cite[(57)]{Morita65},  \cite{Broue1} or 
\cite[Definition 6.6]{Broue2}. For transfer induced by biadjoint functors between
more general categories see \cite{Chouinard}, and
for the graded version needed in this paper see \cite[\S 4, \S 7]{Ligrblock} or also 
\cite[\S 5]{LiHHtt}, for instance. For simplicity we will write $\tr_M$ instead
of $\tr_M(U,V)$ whenever $U$, $V$ are clear from the context.

\begin{Theorem} \label{Thm1}
Let $A$, $B$ be symmetric algebras over a complete discrete valuation ring
$\CO$ with a field of fractions $K$ of characteristic zero.
 Suppose  that $K\tenO A$, $K\tenO B$ are semisimple. 
 Let $M$ be an $A$-$B$-bimodule which is finitely generated projective as a left
 $A$-module and as a right $B$-module. Let $n$ be an integer, and let $U$, $V$ 
 be finitely generated
 $\CO$-free $A$-modules. For any $\alpha\in \hatExt^n_A(U,V)$ and
 $\beta\in \hatExt_B^{-n}(M^\vee\tenA U, M^\vee\tenA V)$ we have
 $$\langle \alpha, \tr_M(\beta)\rangle_A =
  \langle \Id_{M^\vee} \ten \alpha , \beta\rangle_B,$$
  $$\langle \tr_M(\beta), \alpha\rangle_A = 
  \langle \beta, \Id_{M^\vee}\ten \alpha\rangle_B$$
  \end{Theorem}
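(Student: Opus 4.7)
The plan is to combine the explicit description of the Tate duality pairing from \cite{EGKL} (as recalled in \ref{Tatedegree0-3}) with the Frobenius reciprocity built into the transfer $\tr_M$. Since $M$ is finitely generated projective on both sides and $A$, $B$ are symmetric, the functor $M^\vee \tenA -$ is both a left and a right adjoint of $M \tenB -$, and the transfer $\tr_M$ arises, up to the chosen symmetrising forms of $A$ and $B$, as the counit of one of these adjunctions applied to cochain representatives; this is the picture recalled in Section \ref{transferSection}. The intuition behind the identities in the theorem is therefore exactly Frobenius reciprocity on cochains, with the $K/\CO$-valued pairing replacing the ordinary evaluation.

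First I would reduce to a fixed low degree, most naturally $n = 0$. Because $M^\vee$ is projective as a left $A$- and right $B$-module, the functor $M^\vee \tenA -$ is exact and sends projective $A$-modules to projective $B$-modules, so it sends projective resolutions of $U$ and $V$ to projective resolutions (up to projective summands) of $M^\vee \tenA U$ and $M^\vee \tenA V$. The pairings $\langle -, - \rangle_A$ and $\langle -, - \rangle_B$, the transfer $\tr_M$, and the induction $\Id_{M^\vee} \ten -$ are all compatible with the connecting homomorphisms from short exact sequences $0 \to \Omega U \to P \to U \to 0$ with $P$ projective, so iterated dimension shifting reduces the statement for arbitrary $n$ to the statement in degree $0$.

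In degree $0$, I would unwind both sides using the explicit description of the pairings from \cite{EGKL}: Tate classes are represented by chain maps between complete projective resolutions, composition gives an endomorphism class, the symmetrising forms $s_A$, $s_B$ are applied, and the passage to $K/\CO$ comes from the connecting map for $0 \to \CO \to K \to K/\CO \to 0$. The identity $\langle \alpha, \tr_M(\beta)\rangle_A = \langle \Id_{M^\vee} \ten \alpha, \beta\rangle_B$ then reduces to the identity $s_A \circ \tr_M = s_B$ at the level of bimodule endomorphisms (which is essentially the defining property of $\tr_M$ relative to the symmetrising forms), combined with naturality of the adjunction isomorphism $\Hom_A(M \tenB X, Y) \cong \Hom_B(X, M^\vee \tenA Y)$ at the level of chain representatives. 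The second identity then follows from the first, either by a parallel direct computation on the other side of the pairing, or by invoking non-degeneracy.

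The main obstacle I anticipate is the coherent bookkeeping of the $\CO$-to-$K/\CO$ transition on both sides. Tate-Ext groups in this setting are finite-length $\CO$-torsion modules, and to extract the $K/\CO$-valued pairing one must lift representatives to $K$-coefficients, compose there, and reduce the result modulo $\CO$. Ensuring that this lift is performed consistently on the $A$- and $B$-sides so that the equality holds on the nose rather than up to an $\CO$-unit, and that the dependence on choices of symmetrising forms cancels between $s_A$ and $s_B$ through the transfer, is where the specific form of the pairing given in \cite{EGKL} is essential and where the technical work concentrates.
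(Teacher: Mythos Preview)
Your reduction to degree $0$ is correct and matches the paper's strategy: the paper also shifts by $\Sigma^n$ and invokes the compatibility $\Sigma^n(\tr_M(\beta)) = \tr_M(\Sigma^n(\beta))$ (equation \ref{transferSigmacommute}) to bring everything down to $\Hombar_A(U,\Sigma^n(V)) \times \Hombar_A(\Sigma^n(V),U)$, which is a degree-$0$ instance.

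The gap is in your degree-$0$ argument. The pairing in \cite{EGKL} is not given by ``applying the symmetrising forms $s_A$, $s_B$'' to a composite; it is the trace on $KU$ of the $KA$-endomorphism $z_A^{-1}\,\beta\circ\alpha$, reduced modulo $\CO$, where $z_A\in Z(A)$ is the relative projective element determined by $s_A$ (see \ref{Tatedegree0-1}--\ref{Tatedegree0-3} and \ref{relprojelement}). So what must be shown in degree $0$ is the identity
\[
z_A^{-1}\,\trace_{KU}\bigl(\tr_M(\beta)\circ\alpha\bigr)
\;=\;
z_B^{-1}\,\trace_{K(M^\vee\tenA U)}\bigl(\beta\circ(\Id_{M^\vee}\ten\alpha)\bigr)
\]
over $K$. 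Your proposed key step ``$s_A\circ\tr_M = s_B$'' is not a well-formed identity in this context (neither side is a map out of $\Hom_B(M^\vee\tenA U,M^\vee\tenA V)$ with values where the other lives), and no purely formal adjunction naturality will produce the matching factors $z_A^{-1}$ and $z_B^{-1}$: these are genuinely different central elements, and the equality above is a nontrivial numerical coincidence, not a tautology of Frobenius reciprocity. The obstacle you flag---coherence of the $\CO\to K/\CO$ passage---is actually the easy part; the paper simply works with $\varphi_{KA}$ over $K$ and reduces mod $\CO$ at the end.

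What the paper does instead for degree $0$: since the displayed identity is $K$-linear and stable under field extension, one may assume $KA$, $KB$ are split semisimple, and by additivity in $U$, $V$, $M$ and in the block decompositions reduce to the case where $KA=\End_K(U)$, $KB=\End_K(V)$ are single matrix algebras and $KM=U\tenK V^\vee$. There $z_A=\dim_K(U)$, $z_B=\dim_K(V)$, and the identity is verified by a two-line explicit computation (Proposition \ref{Tate-matrix-1}). The second equation of the theorem then follows from the symmetry $\langle\alpha,\beta\rangle_A=\langle\beta,\alpha\rangle_A$ (equation \ref{Tate-symmetric}). If you want to salvage an abstract argument, you would need to prove directly, for arbitrary symmetric $A$, $B$ and perfect $M$, that $z_A^{-1}\trace_U(\tr_M(\gamma)) = z_B^{-1}\trace_{M^\vee\tenA U}(\gamma)$ for $\gamma\in\End_B(M^\vee\tenA U)$; this is essentially the content of Proposition \ref{Tate-matrix-1} and does not drop out of adjunction alone.
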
 
  
 One consequence of  Theorem \ref{Thm1}  is that Tate 
duality determines   the transfer  maps $\tr_M(U,V)$.  
This comment applies also to the transfer maps in 
 \cite[Theorem 1.2]{LiHHtt} and Tate duality for symmetric $k$-algebras.
 This has an interpretation in the  context of  Calabi-Yau triangulated 
 categories, which we will describe in  Section \ref{CY-section}.

\medskip
Tate-Ext applied to $A$ as a module over the symmetric algebra
$A^e=$ $A\tenO A^\op$ yields Tate-Hochschild cohomology $\hatHH^*(A)$ (see
\ref{hatHHDef} below). Tate duality applied to this situation yields in turn
a non-degenerate bilinear form
$$\langle -, -\rangle_{A^e} : \hatHH^n(A) \times \hatHH^{-n}(A) \to K/\CO;$$
see for instance  \cite[Remark 1.5]{EGKL}.   By \cite[Definition 2.9]{Lintransfer} or 
 the more general construction principle \cite[\S 4, \S 7]{Ligrblock} specialised
 to stable categories of bimodules, the $A$-$B$-bimodule $M$ as above 
 induces a transfer map 
 $$\tr_M : \hatHH^*(B)\to \hatHH^*(A), $$ 
 that we will review in Section \ref{transferSection}.
 The dual $M^\vee$ with respect to the  base ring is a $B$-$A$-bimodule which is
 finitely generated projective as a left $B$-module and as a right $A$-module,
 hence induces a transfer map 
$\tr_{M^\vee} : \hatHH^*(A)\to \hatHH^*(B)$.
There is some abuse of notation:
the transfer map $\tr_M$ in Tate-Hochschild cohomology is not quite a special 
case of the transfer maps $\tr_M(U,V)$ defined previously; their precise  
relationship is described in Remark \ref{notationRemark}. The compatibility
between transfer and Tate duality for Tate-Hochschild cohomology takes the
following form.

\begin{Theorem}  \label{Thm2}
Let $A$, $B$ be symmetric algebras over a complete discrete valuation ring
$\CO$ with a field of fractions $K$ of characteristic zero.
 Suppose  that $K\tenO A$, $K\tenO B$ are semisimple. 
 Let $M$ be an $A$-$B$-bimodule which is finitely generated projective as a left
 $A$-module and as a right $B$-module. Let $n$ be an integer.
 For $\zeta\in \hatHH^n(A)$ and $\tau\in \hatHH^{-n}(B)$ we have
 $$\langle \zeta, \tr_M(\tau)\rangle_{A^e}  = 
 \langle \tr_{M^\vee}(\zeta), \tau \rangle_{B^e},$$
 $$\langle \tr_M(\tau), \zeta \rangle_{A^e} = 
 \langle \tau, \tr_{M^\vee}(\zeta) \rangle_{B^e}.$$
 \end{Theorem}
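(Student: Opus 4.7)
The plan is to deduce Theorem~\ref{Thm2} from Theorem~\ref{Thm1} applied to the enveloping algebras $A^e = A\tenO A^\op$ and $B^e = B\tenO B^\op$. Since $\hatHH^n(A) = \hatExt^n_{A^e}(A, A)$ and analogously for $B$, and both $A^e$, $B^e$ are symmetric $\CO$-algebras whose coefficient extensions to $K$ remain semisimple (in characteristic zero, semisimplicity coincides with separability), Theorem~\ref{Thm1} applies to the pair $(A^e, B^e)$. The bimodule $M$ induces an $A^e$-$B^e$-bimodule $N = M\tenO M^\vee$ with the natural four-sided action, which is finitely generated projective on each side. The tensor functor $N\otimes_{B^e} -$ coincides with $F_2 := M\tenB -\tenB M^\vee$, and dually $N^\vee \cong M^\vee\tenO M$ with $N^\vee\otimes_{A^e} - \cong G_2 := M^\vee\tenA -\tenA M$; specializing to $A$ gives $N^\vee\otimes_{A^e} A \cong M^\vee\tenA M$ as $B^e$-modules, and under this identification the expression $\Id_{N^\vee}\ten \zeta$ of Theorem~\ref{Thm1} coincides with $G_2(\zeta)$.

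Applying Theorem~\ref{Thm1} with $(A, B, M, U, V)$ replaced by $(A^e, B^e, N, A, A)$ yields, for $\zeta \in \hatHH^n(A)$ and $\beta \in \hatExt^{-n}_{B^e}(M^\vee\tenA M, M^\vee\tenA M)$,
\[
\langle \zeta, \tr_N(\beta)\rangle_{A^e} = \langle G_2(\zeta), \beta\rangle_{B^e}.
\]
The plan is to apply this identity with $\beta := \Omega^n(\eta_B)\circ \tau \circ \mathrm{ev}_B$, where $\mathrm{ev}_B \colon M^\vee\tenA M \to B$ and $\eta_B \colon B \to M^\vee\tenA M$ are the evaluation and coevaluation of the biadjunction between $M\tenB -$ and $M^\vee\tenA -$, and analogously $\mathrm{ev}_A \colon M\tenB M^\vee \to A$ and $\eta_A \colon A \to M\tenB M^\vee$ on the $A$-side. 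Theorem~\ref{Thm2} then reduces to two identifications.

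The first is $\tr_N(\beta) = \tr_M(\tau)$ in $\hatHH^{-n}(A)$. The unit and counit of the biadjunction $(F_2, G_2)$ at $A$ and $B$ are the outer tensor products of $\eta_A, \mathrm{ev}_A, \eta_B, \mathrm{ev}_B$ with themselves. Inserting $\beta = \Omega^n(\eta_B)\circ \tau\circ \mathrm{ev}_B$ into the defining formula $\tr_N(\beta) = \Omega^n(\mathrm{ev}_A \tenA \mathrm{ev}_A)\circ F_2(\beta)\circ (\eta_A \tenA \eta_A)$ of the Ext-transfer, the expression collapses via the triangle identities $(\mathrm{ev}_A \tenA \mathrm{ev}_A)\circ F_2(\eta_B) = \mathrm{ev}_A$ and $F_2(\mathrm{ev}_B)\circ (\eta_A \tenA \eta_A) = \eta_A$ (both consequences of the triangle identities for the biadjunction between $M\tenB -$ and $M^\vee\tenA -$) to the formula $\Omega^n(\mathrm{ev}_A)\circ F_2(\tau)\circ \eta_A$ defining $\tr_M(\tau)$.

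The second is $\langle G_2(\zeta), \beta\rangle_{B^e} = \langle \tr_{M^\vee}(\zeta), \tau\rangle_{B^e}$. Setting $T := \Omega^{-n}(\tau)\circ \Omega^{-n}(\mathrm{ev}_B) \circ G_2(\zeta)$ as a morphism $M^\vee\tenA M \to B$ in the stable category of $B^e$-modules, the Yoneda composition underlying the left-hand pairing is the endomorphism $\eta_B \circ T$ of $M^\vee\tenA M$, while that underlying the right-hand pairing is the endomorphism $T \circ \eta_B$ of $B$ (using $\tr_{M^\vee}(\zeta) = \Omega^{-n}(\mathrm{ev}_B) \circ G_2(\zeta) \circ \eta_B$). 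Cyclic invariance of the trace --- a feature of the Calabi--Yau triangulated setup of Section~\ref{CY-section} --- then yields $\Tr(\eta_B \circ T) = \Tr(T \circ \eta_B)$, so the two pairings agree. Combining with the first identification gives the first identity of Theorem~\ref{Thm2}, and the second identity follows symmetrically from the second identity of Theorem~\ref{Thm1}. The main obstacle is verifying the triangle identities collapsing the double biadjoint unit and counit to the single-sided maps, which requires careful tracking of the biadjunction structure on the stable bimodule categories reviewed in Section~\ref{transferSection}.
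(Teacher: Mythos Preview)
Your approach differs from the paper's. The paper does not deduce Theorem~\ref{Thm2} from Theorem~\ref{Thm1}; instead it unwinds $\Sigma^n(\tr_M(\tau))\circ\zeta$ and $\Sigma^n(\tau)\circ\tr_{M^\vee}(\zeta)$ as explicit compositions of bimodule maps (the displays \ref{X1A} and \ref{Y1B}), extends scalars to a splitting field, reduces by additivity to matrix algebras $A=\End_K(U)$, $B=\End_K(V)$, $M=U\otimes_K V^\vee$, and applies a direct trace computation (Proposition~\ref{Tate-matrix-2}) that is parallel to, but logically independent of, the matrix computation (Proposition~\ref{Tate-matrix-1}) underlying Theorem~\ref{Thm1}.

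Your route via the $A^e$-$B^e$-bimodule $N=M\tenO M^\vee$ is sound in outline and more conceptual: it exhibits Theorem~\ref{Thm2} as a formal consequence of Theorem~\ref{Thm1}, avoiding a second matrix-algebra computation. The triangle-identity collapses you describe do hold, and the cyclic-trace argument for the second identification is correct (it is essentially \ref{Tate-symmetric} applied over $B^e$). However, you have slightly misplaced the obstacle. The collapsing identities are routine \emph{once the unit and counit of the $N$-biadjunction at $A$ are known}; the real work is one step earlier. You need that $\epsilon_{N^\vee}\otimes_{A^e}\Id_A$ and $\eta_N\otimes_{A^e}\Id_A$ --- determined by the formulas \ref{MMveeadjunit}, \ref{MveeMadjunit} applied to $N$ with symmetrising forms $s\otimes s$, $t\otimes t$ --- coincide with the composites built from applying $\epsilon_{M^\vee}$ and $\eta_M$ on each side of $A$. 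This is a compatibility between the symmetrising-form adjunction for $N$ as a single bimodule over the enveloping algebras and the composite of the left- and right-sided biadjunctions for $M$; it holds because the dual-basis formulas of Section~3 are multiplicative under $\tenO$, but checking it (and handling $M^\vee$ correctly as an $A^\op$-$B^\op$-bimodule) is exactly the symmetrising-form bookkeeping the paper's direct approach via Proposition~\ref{Tate-matrix-2} sidesteps. Remark~\ref{notationRemark} is a related warning: the Hochschild transfer is not literally a special case of the Ext transfer, and your argument succeeds precisely because you insert $\epsilon_M$ and $\eta_{M^\vee}$ into $\beta$ by hand rather than attempting to identify the two transfers directly.
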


\begin{Remark}
In view of the interpretation of Theorem \ref{Thm1} in terms of
Calabi-Yau triangulated categories (which we will describe in Section 
\ref{CY-section}), it is worth noting that
 there are finite-dimensional algebras which are selfinjective,  not 
 necessarily symmetric, but
 whose stable category is Calabi-Yau of nonnegative dimension. See for
 instance  \cite{ErdSkow}, \cite{Dugas12}, \cite{Ivanov13}, \cite{IvaVol14}, 
 and the references therein. 
 We further draw attention to the appendix in \cite{Bocklandt} by M. Van den Bergh 
 regarding  signs in Serre duality. We largely ignore sign issues in \S 4 (notably in  
 \ref{trM-graded-notation})  because this will not
 needed for the results of this paper, but would be needed for an in-depth 
 interpretation of these results in terms of Calabi-Yau duality. 
 \end{Remark}  

\begin{Remark} \label{motivationRemark}
The main motivation for developing this material is to extend results on finite
group cohomology to symmetric algebras, in order  to provide techniques to
calculate cohomological invariants, such as the Castelnuovo-Mumford regularity, 
that might distinguish classes of symmetric algebras from being block algebras
of finite groups (see \cite{KLcm} for calculations in this context).  
One such distinguishing feature (and necessary tool for
calculations of the regularity)  is  the existence of
a local cohomology spectral sequence for Hochschild cohomology, analogous
to Greenlees' local cohomology spectral sequence in \cite{Greenlees}. 
Benson's approach to this spectral sequence in \cite{BenNY}
makes use of the compatibility of restriction and transfer in group cohomology
with respect to Tate duality over a field. We expect that this compatibility
at the level of Hochschild cohomology, both in \cite{LiHHtt} for algebras over
fields, and the present paper for algebras over complete discrete valuation rings,
 will be one of the technical ingredients towards this programme.
\end{Remark}

\begin{Remark}
Unlike in the Tate duality for symmetric algebras over fields, there is no degree shift 
in the Tate duality for symmetric algebras over a complete discrete valuation
ring $\CO$ with field of fractions $K$. This is due to the fact that we have
replaced duality with respect to the base ring $\CO$ by duality with respect to
the injective syzygy $K/\CO$ of $\CO$, noting that 
we have a short exact sequence $0 \to \CO \to K\to K/\CO\to 0$ in which
both $K$ and $K/\CO$ are injective $\CO$-modules, with $K/\CO$ in degree 
$1$ of this injective resolution of $\CO$.  For Tate cohomology over
more general rings, see Buchweitz \cite{BuchHan}. Further extensions of Tate
cohomology can be found, for instance, in \cite{Farrell}, \cite{Goichot}.
\end{Remark}

\section{Preliminaries} \label{prelim}

We will use without further reference well-known basic material on stable module
categories of symmetric algebras; see e. g. \cite[\S 2.13]{LiBookI}. We briefly
review the main properties of shift functors on stable module categories for
symmetric algebra, 
mainly to adopt some notational abuse for simplicity of exposition later on.

Let $R$ be a commutative Noetherian ring with unit element.
Let $A$ be an $R$-algebra.  An $A$-module $U$ is called {\em relatively
$R$-projective}  if the canonical surjection of $A$-modules
$A\tenR U \to U$ sending $a\ten u$ to $au$ is split, and $U$ is called
{\em relatively $R$-injective}
if the canonical injection of $A$-modules $U\to \Hom_R(A,U)$, $u\mapsto 
(a\mapsto au)$ is split (where $u\in U$ and $a\in A$). See \cite[Section 2.6]{LiBookI}
for details. 

Assume now that $A$ is symmetric.
Then the two classes of relatively $R$-projective and relatively $R$-injective
modules coincide (cf. \cite[Theorem 2.15.1]{LiBookI}).
 We denote by $\modbar(A)$ the relatively $R$-stable
category of finitely generated $A$-modules. The objects of
$\modbar(A)$ are the finitely generated $A$-modules, and morphisms
in $\modbar(A)$ are classes of $A$-homomorphisms $\Hombar_A(U,V)=$
$\Hom_A(U,V)/\Hom^\pr_A(U,V)$, where $U$, $V$ are finitely generated 
$A$-modules and $\Hom^\pr_A(U,V)$ is the $R$-module of $A$-homomorphisms
from $U$ to $V$ which factor through a finitely generated relatively
$R$-projective $A$-module. 
Composition in $\modbar(A)$ is induced by that in the category
of finitely generated $A$-modules $\mod(A)$. Since $R$ is Noetherian, the
category $\mod(A)$ is a full abelian subcategory of the category $\Mod(A)$ of
all $A$-modules. The category $\modbar(A)$ is no longer abelian but
triangulated (cf. \cite[Section A.3]{LiBookII}),with shift functor $\Sigma_A$ 
which sends a finitely generated $A$-module
$U$ to the cokernel of $U\to I$ for some relatively $R$-injective envelope
$I$ of $U$. This assignment is unique up to unique isomorphism in $\modbar(A)$,
hence does indeed induce a functor on $\modbar(A)$ 
(cf. \cite[Theorem 2.14.4]{LiBookI}), still denoted $\Sigma_A$. 
Moreover, since $A$ is symmetric,  the functor $\Sigma_A$
 is an equivalence on $\modbar(A)$ through which $\modbar(A)$
becomes indeed  a triangulated category (cf. \cite[Theorem A.3.2]{LiBookII}) .  
An inverse, denoted $\Sigma_A^{-1}$, of $\Sigma_A$ is induced by the assignement 
sending a finitely generated $A$-module $U$ to the kernel of a relatively
$R$-projective cover $P\to U$ of $U$.  For any non-negative integer $n$ we define
$\Sigma^n_A(U)$ as the $n$-th cokernel of a relatively $R$-injective resolution
of $U$, and $\Sigma_A^{-n}(U)$ as the $n$-th kernel of a relatively $R$-projective
resolution of $U$. As functors on $\modbar(A)$ we have canonical isomorphisms
$\Sigma^n_A \cong (\Sigma_A)^n$ and $\Sigma_A^{-n}\cong$ $(\Sigma_A^{-1})^n$.
We adopt the  convention that $\Sigma_A^0$ is the identity functor on $\modbar(A)$.
For any two integers $n$, $m$, we have canonical identifications of functors
$\Sigma_A^n\circ \Sigma_A^m \cong \Sigma_A^{n+m}$ on $\modbar(A)$. 
For any integer $n$, we have 
\begin{Equation} \label{TateExtDef}
$$\hatExt^n_A(U,V)=\Hombar_A(U,\Sigma_A^n(V)). $$
\end{Equation}
For $n>0$ we have $\hatExt^n_A(U,V)=$ $\Ext^n_A(U,V)$. 
If $U$ is finitely generated projective as an $R$-module, then a relatively $R$-injective
envelope (resp. resolution) can be constructed by taking the $R$-dual of
a relatively $R$-projective cover (resp. resolution) of the $R$-dual of $U$.
Note that if $U$ is finitely generated projective over $R$, then a relatively $R$-projective
resolution of $U$ is in fact a projective resolution of $U$. Thus if $U$, $V$ are
two $A$-modules which are finitely generated projective as $R$-modules, then
$\Hom_A^\pr(U,V)$ consists of the space of $A$-homomorphisms from $U$ to $V$
which factor through a finitely generated projective $A$-module. 
In general, $U$ need
not have an injective resolution of finitely generated injective $A$-modules.

\medskip
We will need the fact that the functor $\Sigma_A$ on $\modbar(A)$ lifts to
an exact functor (albeit not an equivalence in general) on $\mod(A)$ as
follows: set $A^e=$ $A\tenR A^\op$, and consider $A$ as an $A^e$-module.
Then $\Sigma_{A^e}(A)$ is an $A^e$-module which is finitely generated
projective as a left and right $A$-module, and the functor
$\Sigma_{A^e}(A)\tenA -$ on $\mod(A)$ is exact and induces the equivalence
$\Sigma_A$ on $\modbar(A)$.  As an $A$-$A$-bimodule, $\Sigma^{-1}_{A^e}$ is
the kernel of the multiplication map $A\tenR A \to A$, $a\ten b\mapsto ab$, because
this is a projective cover (not necessary minimal) of $A$. The $R$-dual of the
multiplication map together with the symmetry of $A$ yields a relatively
$R$-injective envelope $A\to A\tenR A$.
Thus choosing a symmetrising form of $A$ uniquely determines in $\modbar(A^e)$ 
 an  $A^e$-isomorphism
$$\Sigma_{A^e}(A) \cong (\Sigma_{A^e}^{-1}(A))^\vee$$
and hence more generally, $A^e$-isomorphisms
$$\Sigma_{A^e}^n(A) \cong (\Sigma_{A^e}^{-n}(A))^\vee$$
for all integers $n$.  The Tate analogue  $\hatHH^*(A)$ of the Hochschild 
cohomology $\HH^*(A)$ of $A$  is 
\begin{Equation} \label{hatHHDef}
$$\hatHH^n(A) = \hatExt_{A^e}^n(A, A) = \Hombar_{A^e}(A, \Sigma^n(A)).$$
\end{Equation}
As before, for  $n>0$ we have $\hatHH^n(A)=$ $\HH^n(A)$. 

\medskip
Let $A$, $B$, $C$ be symmetric $R$-algebras. Then the $R$-algebras
$A^\op$, $A\tenR B$, and $A\tenR B^\op$ are symmetric. 
An $A$-$B$-bimodule, or equivalently, an $A\tenR B^\op$-module,  is called 
{\em perfect} if it is finitely generated
projective as a left $A$-module and as a right $B$-module.
We denote by $\perf(A,B)$ the category of perfect $A$-$B$-bimodules.
Note that all modules in $\perf(A,B)$ are
finitely generated projective as $R$-modules. The category $\perf(A,B)$ is
a full $R$-linear subcategory of $\mod(A\tenR B^\op)$ which is closed under
taking direct summands. 
We denote by $\perfbar(A, B)$ the
image of $\perf(A, B)$ in $\modbar(A\tenR B^\op)$; this is a
thick subcategory of the triangulated category $\modbar(A\tenR B^\op)$.
If $M$ is a perfect $A$-$B$-bimodule and $N$ a perfect $B$-$C$-bimodule,
then $M\tenB N$ is a perfect $A$-$C$-bimodule. In particular, the exact
functors $\Sigma^n_{A^e}(A)\tenA -$ and $-\tenB \Sigma^n_{B^e}(B)$ on
$\mod(A\tenR B^\op)$ restrict to exact functors on $\perf(A, B)$, and they
induce functors on $\modbar(A\tenR B^\op)$ which are both canonically
isomorphic to the functor $\Sigma^n_{A\tenR B^\op}$ on $\modbar(A\tenR B^\op)$.
Note that $\perf(A,A)$ is closed under the tensor product over $A$, and hence
$\perfbar(A,A)$ is a tensor triangulated category, with tensor product $-\tenA -$.

If the algebra under consideration is clear from the context, we will simply write
$\Sigma$ for the shift functor on the stable module category, and sometimes
use the same letter $\Sigma$ for some exact lift to the category of finitely
generated modules. This is to keep notation under control, but requires some
care when it comes to establishing that all constructions are well-defined.

\section{Adjunction  for symmetric algebras}

We briefly review without proofs some formalities on bimodules over symmetric 
algebras; broader expositions can be found in many sources such as \cite{Broue1}, 
\cite{Broue2},
\cite[\S 6 Appendix]{Lintransfer},  \cite[\S 3]{LiHHtt}, \cite[\S 2.12]{LiBookI}.
Let $R$ be a commutative Noetherian ring (with unit element), and let $A$, $B$
be symmetric $R$-algebras, with symmetrising forms $s$ and $t$, respectively.
The functor $\Hom_R(-,R)$ is contravariant, and 
for $U$ and $R$-module, we write $U^\vee=\Hom_A(U,R)$.
Let $M$ be a perfect $A$-$B$-bimodule. Since $A$, $B$ are symmetric, the $R$-dual
$M^\vee$ is a perfect $B$-$A$-bimodule. We have a $B$-$A$-bimodule isomorphism
\begin{Equation} \label{MAMvee}
$$\Hom_A(M,A)\cong M^\vee$$ 
\end{Equation}
sending $\alpha\in$ $\Hom_A(M,A)$ to $s\circ \alpha$,
and we have a $B$-$A$-bimodule isomorphism
\begin{Equation}
$$\Hom_{B^\op}(M,B)\cong M^\vee$$ 
\end{Equation}
sending $\beta\in $ $\Hom_{B^\op}(M,B)$ to $t\circ\beta$.
For any $A$-module $U$ we have natural isomorphisms
\begin{Equation} \label{RAdualten} 
$$M^\vee\tenA U \cong \Hom_A(M,A)\tenA U \cong \Hom_A(M,U)$$
\end{Equation} 
where the first map is induced by the isomorphism from equation \ref{MAMvee}
and the  second map sends $\lambda\ten u $ to the map $m\mapsto \lambda(m)u$,
for $u\in U$, $m\in M$, $\lambda \in \Hom_A(M,A)$. Using that $M$ is finitely
generated projective as an $A$-module one sees that this is indeed an isomorphism.
Combining this with the tensor-Hom adjunction shows that
the functors $M\tenB-$ and $M^\vee\tenA-$
between the categories $\mod(A)$ and $\mod(B)$ of finitely generated modules
over $A$ and $B$, respectively, are left and  right adjoint to each other.
More precisely, the choices of symmetrising forms $s$, $t$ determine adjunction 
isomorphisms as follows.  For $U$ a finitely generated $A$-module and $V$ a 
finitely generated $B$-module, we have a natural isomorphism
\begin{Equation} \label{MMveeadj}
$$\Hom_A(M\tenB V,U) \cong \Hom_B(V,M^\vee\tenA U)$$
\end{Equation}
sending $\lambda_{\gamma,u}$ to the map $v\mapsto$ $s\circ\gamma_v\ten u$,
where $\gamma\in$ $\Hom_A(M,A)$, $u\in$ $U$, where
$\lambda_{\gamma,u}\in$ $\Hom_A(M\tenB V,U)$ is defined by
$\lambda_{\gamma,u}(m\ten v)=$ $\gamma(m\ten v)u$, and where 
$\gamma_v\in$ $\Hom_A(M,A)$ is defined by
$\gamma_v(m)=$ $\gamma(m\ten v)$, for all $m\in$ $M$, $v\in$ $V$.
The unit and counit of this adjunction are represented by
bimodule homomorphisms 
\begin{Equation} \label{MMveeadjunit}
$$\epsilon_M : B \longrightarrow M^\vee\tenA M\ ,\ \ 
1_B \mapsto \sum_{i\in I}\ (s\circ \alpha_i) \ten m_i\ ,$$
$$\eta_M : M\tenB M^\vee\longrightarrow A\ ,\ \ m\ten (s\circ\alpha)\mapsto
\alpha(m)\ ,$$
\end{Equation}
where $I$ is a finite indexing set,
$\alpha_i\in$ $\Hom_A(M,A)$ and $m_i\in$ $M$ such that 
$\sum_{i\in I}\ \alpha_i(m')m_i=$ $m'$ for all $m'\in$ $M$.
Similarly, we have a natural isomorphism
\begin{Equation} \label{MveeMadj}
$$\Hom_B(M^\vee\tenA U,V) \cong \Hom_A(U,M\tenB V)$$
\end{Equation}
obtained from \ref{MMveeadj} by
exchanging the roles of $A$ and $B$ and using $M^\vee$
instead of $M$ together with the canonical double duality 
$M^{\vee\vee}\cong$ $M$. The adjunction unit and counit
of this adjunction are  represented by bimodule homomorphisms
\begin{Equation} \label{MveeMadjunit}
$$\epsilon_{M^\vee} : A \longrightarrow M\tenB M^\vee\ ,\ \ 
1_A \mapsto \sum_{j\in J}\ m_j\ten (t\circ \beta_j)\ ,$$
$$\eta_{M^\vee} : M^\vee\tenA M\longrightarrow B\ ,\ \ (t\circ\beta)\ten m \mapsto
\beta(m)\ ,$$
\end{Equation}
where $J$ is a finite indexing set, $\beta_j\in$ $\Hom_{B^\op}(M,B)$,
$m_j\in$ $M$, such that $\sum_{j\in J}\ m_j\beta_j(m')=$ $m'$ for
all $m'\in$ $M$, where $m\in$ $M$ and $\beta\in$ $\Hom_{B^\op}(M,B)$.
Note that $\eta_M\circ \epsilon_{M^\vee}$ is an $A$-$A$-bimodule endomorphism
of $A$, hence given by left or right multiplication with an element in $Z(A)$.
Similarly, $\eta_{M^\vee}\circ \epsilon_{M}$ is a $B$-$B$-bimodule endomorphism
of $B$, hence given by left or right multiplication with an element in $Z(B)$.
Following \cite[Definition 3.1]{Lintransfer}, we set
\begin{Equation} \label{rel-proj-element}
$$\pi_M = (\eta_M\circ \epsilon_{M^\vee})(1_A)$$
$$\pi_{M^\vee} = (\eta_{M^\vee}\circ \epsilon_M)(1_B)$$
\end{Equation}
We call $\pi_M$ the {\it relatively $M$-projective
element of} $Z(A)$. Similarly, $\pi_{M^\vee}$ is called the
{relatively $M^\vee$-projective element of} $Z(B)$. These elements depend on the choices
of the symmetrising forms of $A$ and $B$; see \cite[Remark 3.2]{Lintransfer} for
details. 

\begin{Remark} \label{ring-extend-adj}
The adjunction isomorphisms \ref{MMveeadj}, \ref{MveeMadj},
 and the associated adjunction
units and counits in the equations \ref{MMveeadjunit}, \ref{MveeMadjunit}
commute with extensions of the ring of scalars $R$, where we use the fact that
$M$, $M^\vee$ are finitely generated projective as left and right modules. 
More precisely, if $R\to S$ is
a homomorphism of commutative rings through which $S$ is regarded as an $R$-module,
then, writing $SU=S\tenR U$ and $SU^\vee=$ $\Hom_S(SU,S)$ for any $R$-module $U$, 
we have a canonical isomorphism $S(M^\vee\tenB M)\cong$ $(SM^\vee\ten_{SB} SM$ through which $\Id_S\ten\epsilon_M $ becomes the adjunction unit of $SM\ten_{SB}-$ being
left adjoint to $SM^\vee\ten_{SA}-$. Similar statements hold for the remaining adjunction
unit and the counits. This will be needed in the proofs of the two main theorems for
the extension from a complete discrete valuation ring to its field of fractions.
\end{Remark}

\begin{Remark} \label{adj-additive}
The adjunction isomorphism \ref{MMveeadj} is additive in $M$.  Thus the
adjunction unit and counit in \ref{MMveeadjunit} are additive in $M$ in
the following sense: given two perfect $A$-$B$-bimodules $M$, $N$, the
adjunction unit
$$\epsilon_{M\oplus N} : B \to (M\oplus N)^\vee \tenA (M\oplus N)$$
is equal to the omposition of
$$\epsilon_M + \epsilon_N : B\to M^\vee\tenA M \oplus N^\vee \tenA  N$$
followed by the canonical inclusion of the right side into
$(M\oplus N)^\vee \tenA (M\oplus N)$. Similarly, the adjunction counit
$$\eta_{M\oplus N} : (M\oplus N)\tenB (M\oplus N)^\vee \to A$$
is equal to the map 
$$\eta_M+\eta_N : M\tenA M^\vee \oplus N\tenA N^\vee \to A$$
extended by zero on the mixed summands $M\tenA N^\vee$ and
$N\tenA M^\vee$.  The analogous statements hold for the adjunction
isomorphism \ref{MveeMadj} and the corresponding adjunction unit
and counit in \ref{MveeMadjunit}.
\end{Remark}

\begin{Remark}
If $U$, $V$ have in addition right $C$-module structures for some further $R$-algebra
$C$, then the isomorphisms in \ref{MAMvee}, \ref{RAdualten} are isomorphisms
of right $C$-modules. Thus the isomorphism \ref{MMveeadj} induces an isomorphism
$$\Hom_{A\tenR C^\op}(M\tenB V,U) \cong \Hom_{B\tenR C^\op}(V,M^\vee\tenA U) .$$
\end{Remark}

\section{Transfer for symmetric algebras} \label{transferSection}

Let $R$ be a commutative Noetherian ring (with unit element), and let $A$, $B$
be symmetric $R$-algebras, with symmetrising forms $s$, $t$, respectively.
Let $M$ be a perfect $A$-$B$-bimodule. 
Following \cite{Broue1}, for finitely generated $A$-modules $U$, $V$, we have a 
transfer map 
\begin{Equation} \label{trM-notation}
$$\tr_M = \tr_M(U,V)  : \Hom_B(M^\vee\tenA U, M^\vee \tenA V) \to \Hom_A(U,V)$$
\end{Equation}
sending a $B$-homomorphism $\beta : M^\vee\tenA U\to M^\vee\tenA V$ to the
$A$-homomorphism  
\begin{Equation} \label{trM-Def}
$$\tr_M(\beta) = (\eta_M\ten\Id_V) \circ (\Id_M\ten \beta) \circ 
(\epsilon_{M^\vee} \ten \Id_U).$$ 
\end{Equation}
  More explicitly, $\tr_M(\beta)$ is  the composition of   $A$-homomorphisms
$$\xymatrix{U \ar[rr]^(.35){\epsilon_{M^\vee}\ten \Id_U} & & M\tenB M^\vee \tenA U 
\ar[rr]^{\Id_{M}\ten \beta} & & M\tenB M^\vee \tenA V \ar[rr]^(.59){\eta_M\ten \Id_V} & & V}$$
with the standard identifications $A\tenA U=U$ and $A\tenA V=V$. 
The functors $M\tenB-$ and $M^\vee\tenA-$ are exact and 
preserve finitely generated projective modules. Therefore, if $\beta$ factorises
through a projective $B$-module, then $\tr_M(\beta)$ factorises through a
projective $A$-module, and hence  $\tr_M$ induces a well-defined map, still denoted
\begin{Equation} \label{tr-M-stable}
$$\tr_M  : \Hombar_B(M^\vee\tenA U, M^\vee \tenA V) \to \Hombar_A(U,V).$$
\end{Equation}
It also follows that in the stable module category $\modbar(A)$, 
for any integer $n$, we  have unique isomorphisms
$$\Sigma_A^n(M\tenA M^\vee \tenA U) = M \tenA \Sigma_B^n(M^\vee\tenA U) =
M\tenB M^\vee \tenA \Sigma_A^n(U)$$
and through these identifications and their analogues, we have an equality
of morphisms in the stable category $\modbar(A)$
\begin{Equation} \label{transferSigmacommute}
$$\Sigma^n_A(\tr_M(\beta)) = \tr_M(\Sigma_B^n(\beta)) : \Sigma_A^n(U) \to 
\Sigma_A^n(V).$$
\end{Equation}
By  \cite[\S 7.1]{Ligrblock}
there are graded versions of these transfer maps for Tate and Tate-Hochschild
cohomology.
An element in
$\hatExt^n_B(M^\vee\tenA U,M^\vee\tenA V)$ is represented by a $B$-homomorphism
$\beta : M^\vee\tenA U\to$ $M^\vee\tenA \Sigma^n(V)$, where we identify
$\Sigma^n(M^\vee\tenA U)=$ $M^\vee\tenA \Sigma^n(V)$ and where we use the same letter
$\Sigma$ for either $\Sigma_A$ or $\Sigma_B$. The transfer map
$\tr_{M}$ sends $\beta$ to the element
$\tr_M(\beta)$ in $\Ext_A^n(U, V)$ represented by the
$A$-homomorphism, abusively also denoted $\tr_M(\beta)$, given by
\begin{Equation} \label{trM-graded-notation}
$$\tr_M(\beta) = (\eta_M\ten \Id_{\Sigma^n(V)})\circ (\Id_M\ten \beta) 
\circ (\epsilon_{M^\vee} \ten \Id_U)$$
\end{Equation}
with the standard identifications $A\tenA U=U$ and  $A\tenA \Sigma^n(V)=\Sigma^n(V)$.
 More explicitly, $\tr_M(\beta)$ is obtained as the composition
\begin{Equation}
$$\xymatrix{U \ar[rr]^(.35){\epsilon_{M^\vee}\ten\Id_U} & & M\tenB M^\vee \tenA U 
\ar[rr]^(.45){\Id_{M}\ten\beta} & & M\tenB M^\vee \tenA \Sigma^n(V)
\ar[rr]^(.61){\eta_{M}\ten\Id_{\Sigma^n(V)}} & &  \Sigma^n(V) }$$
\end{Equation}

\medskip
A variation of the same principle applied to bimodules yields in particular a transfer
for Tate-Hochschild cohomology.
We use again simply  $\Sigma$ instead of $\Sigma_{A\tenk A^{\op}}$ or
$\Sigma_{B\tenk B^{\op}}$. An element $\zeta\in$ $\hatHH^n(B)$
is represented by a $B$-$B$-bimodule homomorphism, abusively denoted
by  the same letter,  $\zeta : B\to$ $\Sigma^n(B)$. We denote
by $\tr_M(\zeta)$ the element in $\hatHH^n(A)$ represented
by the $A$-$A$-bimodule homomorphism
$$\xymatrix{
M\tenB M^\vee = M\tenB B\tenB M^\vee
\ar[rrr]^(.45){\Id_M\ten\zeta\ten\Id_{M^\vee}} & & &
M\tenB \Sigma^n(B)\tenB M^\vee=\Sigma^n(M\tenB M^\vee) }$$
precomposed with the adjunction unit $\epsilon_{M^\vee}: A\to$
$M\tenB M^\vee$ and composed with the `shifted' adjunction counit
$\Sigma^n(\eta_M) : \Sigma^n(M\tenB M^\vee)\to$ $\Sigma^n(A)$.
The identification $M\tenB \Sigma^n(B)\tenB M^\vee=$
$\Sigma^n(M\tenB M^\vee)$ is to be understood as the
canonical isomorphism in $\modbar(A\tenk A^{\op})$, using the
fact that the functor $M\tenB - \tenB M^\vee$ sends a
projective resolution of the $B$-$B$-bimodule $B$ to
a projective resolution of the $A$-$A$-bimodule $M\tenB M^\vee$.
Modulo this identification, we thus have graded $k$-linear map
\begin{Equation} \label{trM-HHstar}
$$\tr_M : \hatHH^*(B) \longrightarrow \hatHH^*(A)$$
\end{Equation}
defined by 
\begin{Equation} \label{trM-HH}
$$\tr_M(\zeta) = \Sigma^n(\eta_M)\circ
(\Id_M\ten\zeta\ten\Id_{M^\vee})\circ \epsilon_{M^\vee}\ .$$
\end{Equation}
Note that $\tr_M$ is not necessarily a
multiplicative map from $\hatHH^*(B)$ to $\hatHH^*(A)$. 
In all the cases above, we have analogous transfer maps $\tr_{M^\vee}$
obtained from exchanging the roles of $A$ and $B$. Using $\Ext$ instead
of $\hatExt$ yields the transfer maps introduced in \cite{Lintransfer}.
The two are well-known to coincide for $n>0$. 

Suppose that $A$ is $R$-free. Let $X$ be an $R$-basis of $A$ and $X^\vee$ the dual basis with
respect to the symmetrising form $s$ on $A$; that is, we have a bijection $x\mapsto x^\vee$
from $X$ to $X^\vee$ such that $s(xx^\vee)=1$ for $x\in X$ and $s(xy^\vee)=0$ for 
$x$, $y \in X$ such that $x\neq y$. The element
\begin{Equation} \label{relprojelement}
$$ z_A = \sum_{x\in X} x x^\vee$$
\end{Equation}
is called the {\em relative projective element with respect to $s$}. One easily checks
that this is an element in $Z(A)$ which does not depend on the choice of the basis $X$, but 
which does depend on the choice of $s$. If $s'$ is another symmetrising form, then
there is a unique element $z\in Z(A)^\times$ such that $s'(a) = s(za)$ for all $a\in A$.
If $X^\vee$ is as before the dual basis of $X$ with respect to $s$, then $z^{-1}X^\vee$
is the dual basis of $X$ with respect to $s'$, and hence the relative projective 
element with respect to $s'$ is equal to $z'_A = z^{-1}z_A$. 

\begin{Remark}
If we regard $R$ as a
symmetric algebra with the identity map as symmetrising form and take for $M$
the $A$-$R$-bimodule $A$ (that is, the regular bimodule $A$ restricted to $R$ on the right),
then $z_A$ is the relative $M$-projective element $\pi_M$ defined in
\ref{rel-proj-element} above.  That is, $z_A$ is the image of $1_A$ under the composition
of bimodule homomorphisms
$A \to A\tenR A \to A$, where the second map is given by multiplication in $A$
and the first map is obtained by dualising the multiplication map and then using the
isomorphism $A^\vee\cong A$ and $(A\tenR A)^\vee\cong$ $ A^\vee\tenR A^\vee\cong$
$A\tenR A$. This definition of $z_A$ has the advantage of not needing
$A$ to be free over $R$ but just finitely generated projective as an $R$-module.
For the purpose of this paper we do not need this generality.
\end{Remark}

 Tate duality for Tate-Hochschild cohomology involves
bimodules, and hence we will need the following well-known description of 
relative projective elements for tensor products of symmetric algebras as well as 
their compatibility with the passage to blocks.

\begin{Lemma} \label{zAtenB} 
Let $A$, $B$ be $R$-free symmetric $R$-algebras, with symmetrising forms $s$, $t$, respectively.
Then $A^\op$ is symmetric algebra with $s$ as  symmetrising form, $A\tenR B$ is symmetric
with $s\ten t$ as symmetrising form, and $A\times B$ is symmetric with symemtrising
form $s+t$. With respect to these symmetrising forms, we have
\begin{itemize}
\item[{\rm (i)}]
$z_{A^\op} = z_A$.
\item[{\rm (ii)}] 
$z_{A\tenB B} = z_A \ten z_B$.
\item[{\rm (iii)}]
$z_{A\times B} = (z_A, z_B)$.
\end{itemize}
\end{Lemma}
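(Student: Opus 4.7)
The plan is to prove each part by identifying an explicit $R$-basis of the new symmetric algebra, writing down the dual basis with respect to the prescribed symmetrising form, and then evaluating the formula from \ref{relprojelement}. Since the lemma promises that $z$ is independent of the choice of basis, any convenient basis will do; the natural bases built from $X$ (a basis of $A$ with dual basis $X^\vee$ under $s$) and $Y$ (a basis of $B$ with dual basis $Y^\vee$ under $t$) are the obvious candidates.

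For (ii) and (iii), these identifications are essentially bookkeeping. For (ii), take the basis $\{x\ten y\mid x\in X,\ y\in Y\}$ of $A\tenR B$; the computation $(s\ten t)((x\ten y)(x'^\vee\ten y'^\vee))=s(xx'^\vee)t(yy'^\vee)$ shows that $\{x^\vee\ten y^\vee\}$ is the corresponding dual basis, and summing gives $\sum_{x,y}(xx^\vee)\ten(yy^\vee)=z_A\ten z_B$. For (iii), embed $X$ and $Y$ into $A\times B$ via $x\mapsto (x,0)$ and $y\mapsto (0,y)$; with symmetrising form $(a,b)\mapsto s(a)+t(b)$, the dual basis consists of $(x^\vee,0)$ and $(0,y^\vee)$, and the sum splits into the two coordinates as $(z_A,z_B)$.

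Part (i) is the only part that requires a small additional observation. Take the same basis $X$ in $A^\op$; since $s$ is symmetric one has
$$s(x\cdot_{A^\op} y^\vee)=s(y^\vee x)=s(xy^\vee)=\delta_{x,y},$$
so $X^\vee$ remains the dual basis in $A^\op$. Hence $z_{A^\op}=\sum_x x\cdot_{A^\op} x^\vee=\sum_x x^\vee x$. To identify this with $z_A=\sum_x xx^\vee$, I would invoke basis-independence: using the basis $X^\vee$ of $A$, whose dual basis under $s$ is $X$ (again by symmetry of $s$), the element $z_A$ can equally be computed as $\sum_x x^\vee x$. The main thing to be careful about is consistent conventions: the dual basis is defined by $s(xx^\vee)=1$, and when the ``first'' basis is $X^\vee$ it is its dual, namely $X$, that appears on the right. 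Apart from that bookkeeping, no obstacle is expected; the lemma is essentially a series of compatibility verifications between the construction of $z$ and the elementary operations on symmetric algebras.
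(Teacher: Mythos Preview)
Your proposal is correct and follows essentially the same route as the paper: choose the natural bases built from $X$ and $Y$, identify their duals under the prescribed symmetrising forms, and read off $z$ from the defining sum. Your treatment of part (i) is in fact more explicit than the paper's, which simply asserts that ``$X$ and $X'$ are also dual to each other with respect to $s$ as a symmetrising form of $A^\op$'' and concludes; the basis-independence step you spell out (computing $z_A$ via the basis $X^\vee$ to obtain $\sum_x x^\vee x$) is exactly what justifies that terse conclusion.
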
 

\begin{proof}
A trivial verification shows that $s$, $s\ten t$,  and $s+t$ are symmetrising forms of 
$A^\op$, $A\tenR B$, and $A\times B$, respectively. 
Let $X$ be an $R$-basis of $A$, with dual basis $X'$ and corresponding bijection 
$x\mapsto x'$ fron $X$ to $X'$ as in \ref{relprojelement} above. Then $X$ and $X'$ 
are also dual to each other with 
respect to $s$ as a symmetrising form of $A^\op$. This implies $z_{A^\op}=z_A$, whence (i).
Let $Y$ be an $R$-basis of $B$ with dual basis $Y'$ and corresponding bijection $y\mapsto y'$
for $y\in Y$. Then the image in $A\tenR B$ of $X\ten Y$ is an $R$-basis, and its dual basis with
respect to $s\ten t$ is $X'\ten Y'$, with the bijection from $X\ten Y$ to $X'\ten Y'$ 
mapping $x\ten y$ to $x'\ten y'$, for $x\in X$ and $y\in Y$. It follows that
$$z_{A\tenR B} = \sum_{x\in X, y\in Y}\ xx' \ten yy' = (\sum_{x\in X} xx') \ten (\sum_{y\in Y} yy') =
z_A \ten z_B$$
as stated in (ii). The union $(X \times \{0\}) \cup (\{0\}\times Y)$ is an $R$-basis of 
$A\times B$ with dual
basis $(X'\times \{0\}) \cup (\{0\}\times Y')$. Statement (iii) follows.
\end{proof} 

\begin{Remark} \label{transfer-additive}
The additivity properties of adjunction units and counits mentioned in 
Remark \ref{adj-additive} as well as the additivity of shift functors  on
stable module categories  imply that the transfer maps above are
additive in $M$. More precisely, for $M$, $N$ perfect $A$-$B$-bimodules,
we have $\tr_{M\oplus N} = \tr_M + \tr_N$
for all the variations of transfer maps $\tr_M$ considered in
\ref{trM-notation}, \ref{trM-graded-notation}, \ref{trM-HH} above.
\end{Remark}

\begin{Remark} \label{notationRemark}
The transfer map in Tate-Hochschild cohomology  \ref{trM-HHstar}, \ref{trM-HH}
is not strictly speaking a special case
of the transfer maps $\tr_M(U,V)$, but the two are related via a generalisation
of $\tr_M(U,V)$. Let $A$, $B$, $C$ be symmetric $R$-algebras, and let $U$,$V$
be finitely generated $A\tenR C^\op$-modules.  The transfer map $\tr_M=$
$\tr_M(U,V)$ from \ref{trM-notation}  induces a map, yet again denoted
$$\tr_M = \tr_M(U,V)  : \Hom_{B\tenR C^\op}(M^\vee\tenA U, M^\vee \tenA V) \to 
\Hom_{A\tenR C^\op}(U,V)$$
sending a $B\tenR C^\op$-homomorphism $\beta : M^\vee\tenA U\to M^\vee\tenA V$ 
to the $A\tenR C^\op$-homomorphism  
$$\tr_M(\beta) = (\eta_M\ten\Id_V) \circ (\Id_M\ten \beta) \circ 
(\epsilon_{M^\vee} \ten \Id_U).$$ 
The functor $M\tenB-$ sends a projective $B\tenR C^\op$-module to a projective
$A\tenR C^\op$-module, and hence if $\beta$ factors through a projective
$B\tenR C^\op$-module, then $\Id_M\ten\beta$ factors through a projective
$A\tenR C^\op$-module. Thus $\tr_M$ induces a well-defined map
$$\tr_M  : \Hombar_{B\tenR C^\op}(M^\vee\tenA U, M^\vee \tenA V) \to  
\Hombar_{A\tenR C^\op}(U,V).$$
Applied with $C=A$, $U=A$, $V=\Sigma_{A^e}^n(A)$, this yields a map
\begin{Equation} \label{trM-HHn}
$$\tr_M : \Hombar_{B\tenR A^\op}(M^\vee, \Sigma^n(M^\vee)) \to
\Hombar_{A^e}(A,\Sigma^n(A))$$
\end{Equation}
where we have made use of the standard identifications
$\Sigma^n(B)\tenB M^\vee\cong$ $\Sigma^n(M^\vee)\cong$ $M^\vee\tenA \Sigma^n(A)$
 in the stable category $\modbar(B\tenR A^\op)$.
The functor $-\tenB M^\vee$ induces a graded algebra homomorphism
$\hatHH^*(B) =$ $\hatExt^*_{B^e}(B,B) \to$ 
$\hatExt^*_{B\tenR A^\op}(M^\vee, M^\vee)$, and composing this
with the map $\tr_M$ from \ref{trM-HHn} yields the transfer map in Hochschild
cohomology $\hatHH^*(B) \to$ $\hatHH^*(A)$ from \ref{trM-HHstar}, \ref{trM-HH}.
\end{Remark}

\section{Adjunction maps for matrix algebras}

We need to identify the adjunction maps and the transfer maps reviewed in
the previous section in the case that $A$, $B$ are matrix
algebras. This is elementary linear algebra, so we just give some
pointers towards verifications.  Let $R$ be a commutative ring.
Let  $U$, $V$ be free $R$-modules of finite ranks over $R$.
Set $A=\End_R(U)$ and $B=\End_R(V)$. Then $A$ and $B$ are symmetric
$R$-algebras
with symmetrising forms the trace maps $\trace_U$, $\trace_V$, sending a
$R$-linear endomorphism of $U$, $V$ to its trace, respectively. Any other
symmetrising form of $A$, $B$ is of the form $\rho\cdot\trace_U$, $\rho\cdot\trace_V$
for some $\rho \in R^\times$, respectively. Set $M=U\tenR V^\vee$.
Tensoring with $M$ and its dual is the simplest instance of a Morita equivalence;
all we need to make sure in this Section is that the standard maps in this context 
are indeed the adjunction maps with respect to the trace maps as symmetrising 
forms. These verifications make use of the following well-known Lemma which
links traces to adjunction maps.

\begin{Lemma} \label{matrix-adj}
We have an isomorphism $\Hom_A(U,A)\cong U^\vee$ sending $\lambda\in$
$\Hom_A(U,A)$ to $\trace_U\circ \lambda$. 
We have a commutative diagram of $A$-$A$-bimodule homomorphisms
$$\xymatrix{ & & U\tenR \Hom_A(U,A) \ar[lld]_\sigma \ar[drr]^\rho& & \\
U\tenR U^\vee \ar[rrrr]^\alpha \ar[rrd]_{\tau} & & && A \ar[lld]^{\trace_U} \\
 & & R & & }$$
where $\alpha$ sends $u\ten \mu$ to the endomorphism
$u'\mapsto \mu(u')\ten u$, $\sigma$ sends
$u\ten \lambda$ to $u\ten (\trace_U\circ\lambda)$,
 $\rho$ sends $u\ten \lambda$ to $\lambda(u)$, and 
 $\tau$ sends
$u\ten \mu$ to $\mu(u)$, for all $u$, $u'\in U$, $\lambda\in\Hom_A(U,A)$, and
$\mu\in U^\vee$.  
\end{Lemma}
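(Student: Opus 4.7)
The plan is to reduce the lemma to two standard facts about the finitely generated free $R$-module $U$ with $A=\End_R(U)$: first, that $\alpha : U\tenR U^\vee \to A$ is an $A$-$A$-bimodule isomorphism; second, that under this identification the trace $\trace_U : A\to R$ corresponds to the evaluation pairing, i.e.\ $\trace_U(\alpha(u\ten\mu)) = \mu(u)$. Together these give the bottom triangle $\trace_U\circ \alpha = \tau$ at once, and they reduce all remaining verifications to elementary computations inside $U\tenR U^\vee$.

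To establish the top assertion $\Hom_A(U,A)\cong U^\vee$, I would exhibit the inverse of $\lambda \mapsto \trace_U\circ \lambda$ explicitly: for $\mu\in U^\vee$, send $\mu$ to the map $\lambda_\mu(u) := \alpha(u\ten \mu)$. Left $A$-linearity of $\lambda_\mu$ is immediate from $a(u\ten\mu) = (au)\ten\mu$, and $\trace_U\circ \lambda_\mu = \mu$ by the second standard fact above. The harder direction is to show that every $A$-linear $\lambda : U\to A$ arises this way. Picking a basis $(e_i)$ of $U$ with dual basis $(e_i^*)$, write $\lambda(u) = \sum_{i,j} c_{ij}(u)\,\alpha(e_i\ten e_j^*)$ with $c_{ij}(u)\in R$, and apply the $A$-linearity to the rank-one elements $\alpha(e_k\ten e_l^*)\in A$ to force the coefficients to factor as $c_{ij}(u) = e_i^*(u)\,\mu_j$ for scalars $\mu_j$ independent of $u$ and of $l$. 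One then reads off $\lambda(u) = \alpha(u\ten \mu)$ with $\mu = \sum_j \mu_j e_j^*$, and a comparison of traces identifies this $\mu$ with $\trace_U\circ\lambda$.

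With this identification in hand, the upper triangle $\alpha\circ \sigma = \rho$ becomes a one-line check: evaluate both sides on $u\ten \lambda$ and use $\sigma(u\ten \lambda) = u\ten (\trace_U\circ\lambda)$ together with the identity $\lambda(u) = \alpha(u\ten (\trace_U\circ\lambda))$ from the previous step. The remaining triangle $\tau\circ \sigma = \trace_U\circ \rho$ follows formally from $\alpha\circ \sigma = \rho$ and $\trace_U\circ\alpha = \tau$. The main obstacle is not conceptual but bookkeeping: tracking the left and right $A$-actions carefully when checking $A$-bilinearity of the four maps and when applying $A$-linearity of $\lambda$ to pin down its values. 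This is essentially the simplest case of Morita theory, and everything falls out of the standard identification $A\cong U\tenR U^\vee$ once the trace is recognised as the evaluation pairing.
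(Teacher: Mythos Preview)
Your proposal is correct and follows essentially the same line as the paper's own argument, which is little more than a sketch: the paper obtains the isomorphism $\Hom_A(U,A)\cong U^\vee$ as the special case $M=U$, $B=R$, $s=\trace_U$ of the general isomorphism \ref{MAMvee}, cites an external reference for the lower triangle $\trace_U\circ\alpha=\tau$, and declares the upper triangle an easy verification. You instead unpack all of this directly with a basis, constructing the inverse $\mu\mapsto\lambda_\mu$ explicitly and deriving $\alpha\circ\sigma=\rho$ from the resulting identity $\lambda(u)=\alpha(u\ten(\trace_U\circ\lambda))$; this is exactly the content behind the paper's citations, so the two approaches coincide in substance.
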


\begin{proof}
The first statement is a special case of the isomorphism \ref{MAMvee}.
The commutativity of the lower triangle is well-known; see for instance
 \cite[Proposition 2.10.2]{LiBookI} for a proof. The commutativity of the
upper diagram is an easy verification.
\end{proof}

We identify $M^\vee$ with $V\tenR U^\vee$ via the obvious isomorphisms 
$$M^\vee= (U\tenR V^\vee)^\vee= V^{\vee\vee}\tenR U^\vee=
V\tenR U^\vee. $$ 
This leads to  identifications
$$M\tenB M^\vee = U\tenR V^\vee \tenB V \tenR U^\vee = U\tenR U^\vee$$
where we identify $V^\vee\tenB V = R$ via the map $\nu \ten v \to \nu(v)$,
for $v\in V$ and $\nu \in V^\vee$.  Similarly, we identify
$M^\vee\tenA M = V\tenR V^\vee$. 
Let $\CB$ be an $R$-basis of $U$, with dual basis in $U^\vee$
denoted $\CB^\vee$. For $u\in \CB$
we denote by $u^\vee$ the unique element in $\CB^\vee$ satisfying
$u^\vee(u)=1$ and $u^\vee(u')=0$ for $u'\in$ $\CB$, $u'\neq u$. Similarly,
let $\CC$ be an $R$-basis of $V$, with dual basis in $V^\vee$ 
denoted analogously $\CC^\vee$.
The adjunction units and counits from the preceding section in this case 
(with the choice of symmetrising forms $\trace_U$, $\trace_V$) are all isomorphisms, and
their precise descriptions are as follows.

\begin{Equation} \label{MMveeadjunitM}
$$\epsilon_M : B \longrightarrow M^\vee\tenA M = V\tenR V^\vee \ ,\ \ 
1_B \mapsto \sum_{v\in\CC}\  v \ten v^\vee,$$
$$\eta_M : U\tenR U^\vee = M\tenB M^\vee\longrightarrow A\ ,\ \ u \ten \mu\mapsto
(y \mapsto \mu(y)u)\ ,$$
\end{Equation}
where $u$, $y\in U$, $\mu\in U^\vee$. 
\begin{Equation} \label{MveeMadjunitM}
$$\epsilon_{M^\vee} : A \longrightarrow M\tenB M^\vee = U\tenR U^\vee\ ,\ \ 
1_A \mapsto \sum_{u\in \CB}\  u \ten u^\vee\ ,$$
$$\eta_{M^\vee} : V\tenR V^\vee = M^\vee\tenA M\longrightarrow B\ ,\ \ v\ten \nu \mapsto
(w \mapsto \nu(w)v)\ ,$$
\end{Equation}
where $v$, $w\in V$ and $\nu \in V^\vee$. 
We further note that
\begin{Equation} \label{MMvee-adj-inv}
$$\epsilon_{M^\vee} = (\eta_M)^{-1},\ \ \ \ \ \eta_{M^\vee} = (\epsilon_M)^{-1}.$$
\end{Equation}
An easy verification shows  that the relative projective elements in $Z(A)$ and $Z(B)$
with respect to  the symmetrising forms $\trace_U$ and $\trace_V$, respectively, 
are  equal to 
\begin{Equation} \label{zAzB-matrix}
$$z_A = \rk_R(U)\cdot 1_R, \ \ \ \ \ \ \ \ z_B = \rk_R(V)\cdot 1_R.$$
\end{Equation}

\begin{Remark} \label{symm-form-independent}
Let $s$, $t$ be symmetrising forms of $A$, $B$, respectively. Then 
$s = \lambda \cdot\trace_U$ and $t = \mu \cdot \trace_V$ for some $\lambda$, 
$\mu \in R^\times$.
Denoting by $\epsilon'_M$, $\eta'_M$, $\epsilon'_{M^\vee}$, $\eta'_{M^\vee}$
the adjunction maps from \ref{MMveeadjunitM} and \ref{MveeMadjunitM} with
respect to $s$, $t$, it follows that
$$\epsilon'_M = \lambda\epsilon_M,\ \ \ \ \ 
\eta'_M = \lambda^{-1} \eta_M, \ \ \ \ \ \epsilon_{M^\vee} = \mu \epsilon_{M^\vee},
\ \ \ \ \ \eta'_{M^\vee} = \mu^{-1} \eta_{M^\vee}.$$
Thus the trace map $\tr'_M  : \End_B(M^\vee\tenA U) \to \End_A(U)$ 
with respect to $s$ and $t$ satisfies
$$\tr'_M = \lambda^{-1} \mu \tr_M.$$
The relative projective central elements $z'_A$ and $z'_B$ with respect to $s$, $t$
are 
$$z'_A = \lambda^{-1} z_A = \lambda^{-1}\rk_R(U), \ \ \ \ \ z'_B = \mu^{-1} z_B=
\mu^{-1} \rk_R(V).$$
\end{Remark}

\section{Transfer for matrix algebras} 

Let $K$ be a field of characteristic zero, and let  $U$, $V$ be
finite-dimensional $K$-vector spaces. We set $A=\End_K(U)$ and $B=\End_K(V)$, regarded
as symmetric algebras with symmetrising forms $\trace_U$ and $\trace_V$, respectively.
We set $M = U\tenK V^\vee$. We note that since $A\tenK A^\op$ and $A\tenK B^\op$ are simple
algebras, it follows that every finitely generated $A$-$A$-bimodule is projective and isomorphic
to a finite direct sum of copies of $A$, and every finitely generated $A$-$B$-bimodule is projective
and  isomorphic to a finite  direct sum of copies of the simple $A$-$B$-bimodule $M$.  
For finitely generated $A$-modules
$U'$, $U''$ we denote by 
$$\varphi_A : \Hom_A(U',U'') \times \Hom_A(U'',U') \to K$$
the bilinear map  sending $(\alpha, \beta)$  to $z_A^{-1} \trace_{U'}(\beta\circ \alpha)$. 
We use the analogous
notation  $\varphi_B$ for finitely generated $B$-modules.
We keep the above notation throughout this section. 

\begin{Proposition} 
Let $U'$, $U''$ be finitely generated $A$-modules.
Then the bilinear map
$$\varphi_A : \Hom_A(U',U'') \times \Hom_A(U'',U') \to K$$
is non-degenerate.
\end{Proposition}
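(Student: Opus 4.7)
The plan is to use Morita theory to reduce the verification to the standard non-degeneracy of the matrix trace pairing. Since $A=\End_K(U)$ is a simple $K$-algebra, the module $U$ is the unique simple $A$-module up to isomorphism, with $\End_A(U)\cong K$. Hence any finitely generated $A$-module is isomorphic to $U^r$ for a unique integer $r\geq 0$, and I fix isomorphisms $U'\cong U^a$ and $U''\cong U^b$. Under these the $K$-vector spaces $\Hom_A(U',U'')$ and $\Hom_A(U'',U')$ are identified with $M_{b,a}(K)$ and $M_{a,b}(K)$ respectively, by sending each morphism to its matrix of scalars relative to the chosen direct sum decompositions.

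Next I would compute $\varphi_A$ explicitly under this identification. If $\alpha\in \Hom_A(U',U'')$ corresponds to $X\in M_{b,a}(K)$ and $\beta\in \Hom_A(U'',U')$ corresponds to $Y\in M_{a,b}(K)$, then $\beta\circ\alpha$ is the $A$-endomorphism of $U^a$ whose matrix of scalars is $YX\in M_a(K)$, so its $K$-linear trace is
$$\trace_{U'}(\beta\circ\alpha)=\dim_K(U)\cdot \tr(YX),$$
where $\tr$ denotes the ordinary matrix trace. By \ref{zAzB-matrix} we have $z_A=\dim_K(U)\cdot 1_K$, and this scalar is invertible in $K$ because $\chr(K)=0$. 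Dividing, one obtains the clean formula $\varphi_A(\alpha,\beta)=\tr(YX)$.

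Finally, the non-degeneracy of $\varphi_A$ reduces to the well-known non-degeneracy of the pairing $M_{b,a}(K)\times M_{a,b}(K)\to K$ given by $(X,Y)\mapsto \tr(YX)$: if $X\neq 0$ has some entry $X_{ij}\neq 0$, then the matrix unit $E_{ji}\in M_{a,b}(K)$ satisfies $\tr(E_{ji}X)=X_{ij}\neq 0$, and a symmetric argument handles the other variable. The only place the hypothesis $\chr(K)=0$ enters is in ensuring $z_A$ is invertible; this is the sole point requiring attention, and the main bookkeeping concern is simply distinguishing the $K$-linear trace of endomorphisms from the ordinary matrix trace of scalar blocks, their ratio being exactly $\dim_K(U)=z_A$.
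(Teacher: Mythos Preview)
Your argument is correct. The paper's own proof simply cites \cite[Proposition 2.1]{EGKL} and remarks that the statement is ``easily checked directly''; your proposal is exactly that direct verification, carried out via the Morita identification $\Hom_A(U^a,U^b)\cong M_{b,a}(K)$ and the standard non-degeneracy of the trace pairing on rectangular matrices.
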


\begin{proof} 
This is a trivial consequence of \cite[Proposition 2.1]{EGKL}, and easily checked directly.
\end{proof}

The following result is needed for the proof of Theorem \ref{Thm1}.

\begin{Proposition} \label{Tate-matrix-1}
Let $U'$, $U''$ be finite-dimensional $A$-modules.
 Let $\alpha : U'\to U''$ be an $A$-homomorphism
and let $\beta : M^\vee\tenA U' \to M^\vee\tenA U''$ be a $B$-homomorphism. 
For any choice of symmetrising forms on $A$ and $B$ we have 
$$\varphi_A(\alpha, \tr_M(\beta)) = \varphi_B(\Id_{M^\vee}\ten \alpha, \beta). $$
\end{Proposition}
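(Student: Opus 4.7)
The plan is to reduce the identity to a trace-compatibility statement for the Morita equivalence $M^\vee\tenA -$, and then to verify that compatibility by a direct computation using that $A=\End_K(U)$ and $B=\End_K(V)$ are simple $K$-algebras.

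First I would use naturality of $\epsilon_{M^\vee}\ten -$ in the right tensor factor, together with the definition \ref{trM-Def}, to check
$$\tr_M(\beta)\circ \alpha \;=\; \tr_M\!\bigl(\beta\circ(\Id_{M^\vee}\ten\alpha)\bigr).$$
Setting $W:=M^\vee\tenA U'$ and $\gamma:=\beta\circ(\Id_{M^\vee}\ten\alpha)\in \End_B(W)$, this reduces the desired equality to
$$z_A^{-1}\trace_{U'}\bigl(\tr_M(\gamma)\bigr) \;=\; z_B^{-1}\trace_W(\gamma)\qquad (\gamma\in \End_B(W)).$$

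Next I would exploit the Morita equivalence $M^\vee\tenA -:\mod(A)\to \mod(B)$. The inversion relations \ref{MMvee-adj-inv} give $\eta_M\circ\epsilon_{M^\vee}=\Id_A$, and combining this with naturality one checks $\tr_M(\Id_{M^\vee}\ten\delta)=\delta$ for every $\delta\in\End_A(U')$. Hence the $K$-algebra homomorphism
$$F:\End_A(U')\to \End_B(W)\ ,\quad \delta\mapsto \Id_{M^\vee}\ten\delta$$
is an isomorphism with inverse $\tr_M$, and the identity to prove reduces to
$$z_A^{-1}\trace_{U'}(\delta)\;=\;z_B^{-1}\trace_W(F(\delta))\qquad (\delta\in\End_A(U')).$$

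To verify this last equality I would reduce to matrix units. Since $A$ is split simple, any finitely generated $A$-module is isomorphic to $U^n$ for some $n\ge 0$, so $W\cong V^n$, and by $K$-linearity of both sides it suffices to check the equality on the matrix units $E_{ij}\in \End_A(U^n)\cong M_n(K)$. For $i\ne j$ both traces vanish by nilpotence, while for $i=j$ the idempotent $F(E_{ii})$ is the projection onto the $i$-th summand of $V^n$, giving $\trace_{U^n}(E_{ii})=\dim_K(U)=z_A$ and $\trace_{V^n}(F(E_{ii}))=\dim_K(V)=z_B$ by \ref{zAzB-matrix}, so the equality is immediate. Finally, for arbitrary symmetrising forms $s=\lambda\,\trace_U$ and $t=\mu\,\trace_V$ with $\lambda,\mu\in K^\times$, Remark \ref{symm-form-independent} gives $\tr'_M=\lambda^{-1}\mu\cdot\tr_M$, $z'_A=\lambda^{-1}z_A$, and $z'_B=\mu^{-1}z_B$, so both sides of the proposition rescale by the common factor $\mu$ and the identity is independent of these choices. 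The main technical step will be the identification $\tr_M=F^{-1}$ at the level of endomorphism algebras, which crucially depends on the inversion relations \ref{MMvee-adj-inv} specific to the matrix setting.
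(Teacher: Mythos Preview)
Your proof is correct. The paper's argument is close in spirit but more direct: it uses additivity of $\varphi_A$ and $\varphi_B$ to reduce at once to $U'=U''=U$, then bilinearity to take $\alpha=\Id_U$, and finally observes that $M^\vee\tenA U\cong V$ forces $\beta=\lambda\,\Id$ for a scalar $\lambda$, whence both sides equal $\lambda$. Your route---absorbing $\alpha$ into $\beta$ via the naturality identity $\tr_M(\beta)\circ\alpha=\tr_M(\beta\circ(\Id_{M^\vee}\ten\alpha))$, identifying $\tr_M$ as the inverse of $F:\delta\mapsto \Id_{M^\vee}\ten\delta$ via \ref{MMvee-adj-inv}, and then checking the resulting trace compatibility on matrix units---is a slightly more structural repackaging of the same computation. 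The paper's version is shorter; yours makes the role of the Morita equivalence explicit and isolates the key identity $z_A^{-1}\trace_{U'}(\delta)=z_B^{-1}\trace_{W}(\Id_{M^\vee}\ten\delta)$ as a statement about how the equivalence transports normalised traces.
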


\begin{proof}
Assume first that the symmetrising forms are $\trace_U$ and $\trace_V$.
Note that $U'$, $U''$ are isomorphic to finite direct sums of copies of $U$. Since
$\varphi_A$, $\varphi_B$ are additive in both components, we may assume
that $U'=U''=U$. Then $\alpha$ is a $K$-linear multiple, and since both
sides are bilinear, we may assume that $\alpha=\Id_U$. Thus stated equation
is equalent  to 
$$z_A^{-1}\trace_U(\tr_M(\beta)) = z_B^{-1} \trace_{M^\vee\tenA U}(\beta)$$
Note that $M^\vee\tenA U\cong V$, and hence $\beta=\lambda \Id_{M^\vee\tenA U}$ for
some $\lambda \in K$.  In particular, we have
$$\trace_{M^\vee\tenA U}(\beta) = \lambda \dim_K(V).$$
By \ref{trM-Def}, we have
$\tr_M(\beta) = $ $\eta_M \circ (\Id_M\ten \beta) \circ \epsilon_{M^\vee} =$
$\lambda \eta_M\circ \epsilon_{M^\vee}=$ $\lambda \Id_U$, and hence
$$\trace_U(\tr_M(\beta)) = \dim_K(U)\cdot 1_K .$$
Using $z_A=\dim_K(U)\cdot 1_K$ and $z_B=\dim_K(V)\cdot 1_K$ the
result follows for the chosen symmetrising forms $\trace_U$ and $\trace_V$.
The result for arbitrary symmetrising forms follows easily from the
Remark \ref{symm-form-independent}.
\end{proof}

The next result, which is a variation of the previous Proposition, will be needed in
the proof of Theorem \ref{Thm2}.  Set $A^e = A\tenK A^\op$.
By Lemma \ref{zAtenB}, we have $z_{A^e} =$
$z_A\ten z_A$. Thus the inverse of this element acts on an $A$-$A$-bimodule by simultaneously
multiplying by $z_A^{-1}$ on the left and on the right. In particular, this element acts
on $A$  by multiplication by $z_A^{-2}$.

\begin{Proposition} \label{Tate-matrix-2}
Let $X$ be a finitely generated $A$-$A$-bimodule and let $Y$ be a finitely generated
$B$-$B$-bimodule.  Let $\zeta : A\to X$ be an $A$-$A$-bimodule homomorphism,
let $\xi : X\tenA M \to M\tenB Y$ be an $A$-$B$-bimodule
homomorphism, and let $\sigma : Y \to B$ be a $B$-$B$-bimodule homomorphism. 
For any choice of symmetrising forms on $A$ and on $B$, the
trace on $A$ of the map
$$z_A^{-2}\cdot \eta_M\circ (\Id_M\ten\sigma\ten\Id_{M^\vee}) \circ (\xi \ten\Id_{M^\vee})
\circ (\Id_X \ten \epsilon_{M^\vee}) \circ \zeta$$
is equal to the trace on $B$ of the map
$$z_B^{-2}\cdot \sigma \circ \eta_{M^\vee} \circ (\Id_{M^\vee}\ten\xi) \circ
(\Id_{M^\vee}\ten\zeta \ten\Id_M) \circ \epsilon_M.$$
\end{Proposition}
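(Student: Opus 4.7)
My plan is to reduce the identity to the elementary case where $X=A$, $Y=B$ and $\zeta$, $\xi$, $\sigma$ are all identity maps, and to compute both sides directly from equation \ref{MMvee-adj-inv}.

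First, by Remark \ref{symm-form-independent}, replacing the symmetrising forms $\trace_U$, $\trace_V$ by $s=\lambda\trace_U$ and $t=\mu\trace_V$ multiplies $\eta_M$ by $\lambda^{-1}$, $\epsilon_{M^\vee}$ by $\mu$, and $z_A^{-2}$ by $\lambda^2$, for a total scaling factor of $\lambda\mu$ on the left-hand side. A parallel calculation on the right-hand side (where $\epsilon_M$ contributes $\lambda$, $\eta_{M^\vee}$ contributes $\mu^{-1}$, and $z_B^{-2}$ contributes $\mu^2$) yields the same factor $\lambda\mu$. Hence if the identity holds for the standard symmetrising forms, it holds for all choices, and we may henceforth assume $A$ and $B$ are equipped with $\trace_U$ and $\trace_V$.

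Second, since $A\tenK A^\op$ and $B\tenK B^\op$ are simple $K$-algebras, every finitely generated $A$-$A$-bimodule is a finite direct sum of copies of $A$, and similarly for $B$-$B$-bimodules. Choose bimodule isomorphisms $X\cong A^r$ and $Y\cong B^s$. Both sides of the claimed identity are $K$-trilinear in the triple $(\zeta,\xi,\sigma)$, and inspection shows that they respect the direct sum decompositions of $\zeta$ along $X\cong A^r$, of $\sigma$ along $Y\cong B^s$, and of $\xi$ along both. Hence we may assume $X=A$, $Y=B$. In that case $\zeta$ is a $K$-scalar multiple of $\Id_A$, $\xi$ is a $K$-scalar multiple of $\Id_M$ (since $M$ is the unique simple $A$-$B$-bimodule up to isomorphism), and $\sigma$ is a $K$-scalar multiple of $\Id_B$. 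By trilinearity it suffices to treat $\zeta=\Id_A$, $\xi=\Id_M$, $\sigma=\Id_B$.

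Under these reductions, with the canonical identifications $X\tenA M=M$, $M\tenB Y=M$, and $M\tenB B\tenB M^\vee=M\tenB M^\vee$, the left-hand expression collapses to $z_A^{-2}\cdot\eta_M\circ\epsilon_{M^\vee}$. By \ref{MMvee-adj-inv} this equals $z_A^{-2}\cdot\Id_A$, whose $K$-linear trace is $z_A^{-2}\dim_K(A)=z_A^{-2}(\dim_K U)^2$, which is $1$ by \ref{zAzB-matrix}. The right-hand expression collapses analogously to $z_B^{-2}\cdot\eta_{M^\vee}\circ\epsilon_M=z_B^{-2}\Id_B$, again with $K$-trace $1$. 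The identity $1=1$ concludes the proof. The main obstacle is the bookkeeping in the reduction step: one must verify that the five-fold composition in each of the two expressions genuinely collapses under the substitutions $X=A$, $Y=B$, $\zeta=\xi=\sigma=\Id$, i.e.\ that the various iterated tensor product identifications are the intended canonical ones so that \ref{MMvee-adj-inv} applies directly.
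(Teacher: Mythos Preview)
Your proof is correct and follows essentially the same approach as the paper: reduce $X$ and $Y$ to copies of $A$ and $B$ by additivity, then use that $\zeta$, $\xi$, $\sigma$ become scalars and compute both sides directly via \ref{MMvee-adj-inv} and \ref{zAzB-matrix}. The only organisational difference is that you dispose of the dependence on symmetrising forms up front by tracking the $\lambda\mu$ scaling on each side, whereas the paper carries arbitrary forms $s=\lambda\,\trace_U$, $t=\mu\,\trace_V$ through the final computation and observes that both traces equal $\lambda\mu\sigma\xi\zeta$; the content is identical.
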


\begin{proof}
Both maps in the statement are additive in $X$. Since $A$ is up to isomorphism the
unique indecomposable $A$-$A$-bimodule, it follows that $X$ is isomorphic to a 
finite direct sum of copies of $A$, and hence we may assume that $X=A$.
For the same reason we may assume that $Y=B$. Then $\xi$ becomes an
$A$-$B$-bimodule endomorphism of the simple $A$-$B$-bimodule $M=U\tenK V^\vee$,
hence is equal to multiplication by a scalar, which we will denote abusively again by $\xi$.
Similarly, $\sigma$ becomes a bimodule endomorphism of $B$, so is given by
multiplication with a scalar, again denoted by $\sigma$, and $\zeta$ becomes
a bimodule endomorphism of $A$, given by multiplication with a scalar, again
denoted by $\zeta$. Thus the two maps in the statement take the form
\begin{Equation} \label{e1}
$$z_A^{-2}\sigma\xi\zeta \cdot (\eta_M \circ \epsilon_{M^\vee}), $$
$$z_B^{-2}\sigma\xi\zeta \cdot (\eta_{M^\vee}\circ\epsilon_M).$$
\end{Equation}
Let  $s$, $t$ be symmetrising forms of $A$, $B$. Then $s = \lambda \cdot\trace_U$
and $t = \mu \cdot\trace_V$ for some $\lambda$, $\mu\in K^\times$. 
It follows from the Remark \ref{symm-form-independent} and \ref{MMvee-adj-inv}
that then the adjunction units and counits with respect to these symmetrising
forms satisfy 
$$\eta_M\circ \epsilon_{M^\vee} = \lambda^{-1}\mu\Id_A, $$
$$\eta_{M^\vee}\circ\epsilon_M = \lambda\mu^{-1}\Id_B. $$ 
Also by the Remark \ref{symm-form-independent}, the
relative projective elements are $z_A = \lambda^{-1} \dim_K(U)$ and
$z_B = \mu^{-1}\dim_K(V)$.  Thus $z_A^{-2}=$ $\lambda^2\dim_K(U)^{-2}$ and
$z_B^{-2}=$ $\mu^2\dim_K(V)^{-2}$. 
Therefore the two maps in \ref{e1} are equal to  the two maps
$$\dim_K(U)^{-2}\lambda\mu\sigma\xi\zeta \cdot \Id_A, $$
$$\dim_K(V)^{-2}\lambda\mu\sigma\xi\zeta\cdot \Id_B.$$
Since $\dim_K(U)^2=\dim_K(A)$ and $\dim_K(V)^2=\dim_K(B)$ it follows
that both maps have the same trace, equal to $\lambda\mu\sigma\xi\zeta$. 
\end{proof}

\section{Proof of Theorem \ref{Thm1}}

Let $\CO$ be a complete discrete valuation ring with field of fractions $K$ of 
characteristic zero. Let $A$, $B$ be symmetric $\CO$-algebras such that
$K\tenO A$ and $K\tenO B$ are semisimple. Note that then $K\tenO A$ and
$K\tenO B$ are separable since $\chr(K)=0$. Fix symmetrising forms $s$, $t$
of $A$, $B$, respectively. Let $M$ be an $A$-$B$-bimodule which is 
finitely generated projective as a left $A$-module and as a right $B$-module.
Let $U$, $V$ be finitely generated $\CO$-free $A$-modules.
We write $KA$ instead of $K\tenO A$ and $KU$ instead of $K\tenO U$; similarly
for $B$ and $V$. We identify $\Hom_{KA}(KU, KV)=$ $K\Hom_A(U,V)$ whenever
convenient, and we identify $\Hom_A(U,V)$ with its image in this space.

In degree zero,  we have $\hatExt^0_A(U,V)=$ $\Hombar_A(U,V)$, and
Tate duality takes the following form. 
By \cite[Proposition 2.1]{EGKL} we have a non-degenerate bilinear form
\begin{Equation} \label{Tatedegree0-1}
$$\varphi_{KA}(-,-) : K\Hom_A(U,V) \times K\Hom_A(V,U) \to K$$
\end{Equation}
which sends $(\alpha, \beta)\in$ $K\Hom_A(U,V)\times K\Hom_A(V,U)$ to
the trace   on $KU$ of the 
$KA$-endomorphism $z_A^{-1}\beta\circ\alpha$ of $KU$.
This restricts to an $\CO$-bilinear form
\begin{Equation} \label{Tatedegree0-2}
$$\varphi_A : \Hom_A(U,V) \times \Hom_A(V,U) \to K$$
\end{Equation}
By \cite[Theorem 1.3]{EGKL} and its proof in \cite[\S 2]{EGKL}, this form sends
$\Hom_A^\pr(U,V)\times\Hom_A(V,U)$ and $\Hom_A(U,V)\times \Hom^\pr_A(V,U)$
to $\CO$, and the induced bilinear  form
\begin{Equation} \label{Tatedegree0-3}
$$\langle - , - \rangle_A : \Hombar_A(U,V) \times \Hombar_A(V,U) \to K/\CO$$
\end{Equation}
is non-degenerate. We note that for any $\alpha\in\Hom_A(U,V)$ and
$\beta\in\Hom_A(V,U)$ we have 
\begin{Equation} \label{Tate-symmetric}
$$\varphi_{KA}(\alpha, \beta) = \varphi_{KA}(\beta,\alpha),$$
$$ \langle \alpha, \beta \rangle_A = \langle \beta , \alpha \rangle_A.$$
\end{Equation}
To see this, observe that left multiplication by $z_A^{-1}$ commutes with all 
$KA$-homomorphisms. Thus
the $KA$-endomorphism $z_A^{-1} (\beta \circ \alpha)$ of $KU$ is equal to
$(z_A^{-1}\beta) \circ \alpha$, hence has the same trace on $KU$ as the
endomorphism $\alpha\circ (z_A^{-1}\beta)$ on $KV$. The latter is equal to
$z_A^{-1}(\alpha\circ \beta)$.

\begin{proof}{Proof of Theorem \ref{Thm1}}
We start by proving Theorem \ref{Thm1} in degree zero. 
Tate duality in degree zero takes the form as reviewed in \ref{Tatedegree0-1},
\ref{Tatedegree0-2}, \ref{Tatedegree0-3}  just above. 
Since the functors  $M\tenB-$ and $M^\vee\tenA-$
preserve finitely generated projective modules over $A$ and $B$, it follows that
if $\beta\in\Hom_A^\pr(M^\vee\tenA U, M^\vee\tenA V)$, then
$\tr_M(\beta) \in \Hom_A^\pr(U,V)$. Similarly, if $\alpha\in\Hom_A^\pr(U,V)$, then
$\Id_{M^\vee} \ten \alpha\in$ $\Hom_B^\pr(M^\vee\tenA U, M^\vee\tenA V)$.
That is, it suffices to show the equality
$$\varphi_{KA}(\alpha, \tr_M(\beta)) = \varphi_{KB}(\Id_{KM^\vee}\ten\alpha, \beta)$$
where $\alpha\in K\Hom_A(U,V)$ and $\beta \in$ 
$ K\Hom_B(M^\vee\tenA U, M^\vee\tenA V)$. This equation holds if and only if it
holds for field extensions of $K$, so we may assume that $KA$, $KB$ are split
semisimple. That is, $KA$, $KB$ are direct products of matrix algebras. 
Since both sides are additive, we may in fact assume that $KA$, $KB$ are
matrix algebras. In that case, the equation follows from Proposition
\ref{Tate-matrix-1}. Together with \ref{Tate-symmetric}, 
this proves Theorem \ref{Thm1} for $n=0$. 

To prove Theorem \ref{Thm1} in an arbitrary degree $n$, we need to show that
the above is compatible with the shift functors $\Sigma_A$ and $\Sigma_B$
on the relatively $\CO$-stable categories $\modbar(A)$ and $\modbar(B)$.
For simplicity, we denote both shift functors by $\Sigma$. 
We have $\hatExt^n_A(U,V) = \Hombar_A(U,\Sigma^n(V))$, and
$\hatExt^{-n}_A(V,U)=\Hombar_A(V,\Sigma^{-n}(U))\cong \Hombar_A(\Sigma^n(V),U)$,
where the second isomorphism is obtained from applying the functor $\Sigma^n$.
The Tate duality 
$$\langle - , - \rangle_A :  \hatExt_A^{n}(U,V)\times \hatExt_A^{-n}(V,U)
\to K/\CO$$ 
is induced by the map sending $(\alpha, \gamma)\in$
$\Hom_A(U,\Sigma^n(V))\times \Hom_A(V, \Sigma^{-n}(U))$ to the trace on $KU$ of
the endomorphism 
$$z_A^{-1} \Sigma^n(\gamma) \circ \alpha;$$
in other words, this is induced by the degree zero duality applied to $U$ and $\Sigma^n(V)$,
combined with the shift functor $\Sigma^n$.
Applying the degree zero case to $U$ and $\Sigma^n(V)$  yields the equation
$$\langle \alpha, \tr_M(\Sigma^n(\beta))\rangle_A  = 
\langle \Id_{M^\vee}\ten\alpha, \Sigma^n(\beta) \rangle_B$$
in $K/\CO$.
It remains to show that the left side is equal to
$\langle \alpha, \Sigma^n(\tr_M(\beta)) \rangle_A$.
This expression depends only on the images of the morphisms $\alpha$, $\beta$
in their respective stable categories. By \ref{transferSigmacommute}, 
the images in the stable category
$\modbar(A)$ of $\Sigma^n(\tr_M(\beta)$ and $\tr_M(\Sigma^n(\beta))$ are
equal. Again  using \ref{Tate-symmetric}, the result follows.
\end{proof}

\section{Proof of Theorem \ref{Thm2}}

As in the previous section, let
$\CO$ be a complete discrete valuation ring with field of fractions $K$ of 
characteristic zero. Let $A$, $B$ be symmetric $\CO$-algebras such that
$KA=K\tenO A$ and $KB=K\tenO B$ are semisimple. Fix symmetrising forms $s$, $t$
of $A$, $B$, respectively. Let $M$ be an $A$-$B$-bimodule which is 
finitely generated projective as a left $A$-module and as a right $B$-module.
As before, we set $A^e=A\tenO A^\op$ and $B^e=B\tenO B^\op$. 
Let $n$ be an integer. 
The Tate duality
$$\langle - , - \rangle_{A^e} : \hatHH^n(A) \times \hatHH^{-n}(A) \to K/\CO$$
is induced by a map
$$\varphi_{A^e} : \Hom_{A^e}(A,\Sigma^n(A)) \times \Hom_{A^e}(A,\Sigma^{-n}(A))
\to K$$
 sending $(\alpha,\beta)\in$
$\Hom_{A^e}(A,\Sigma^n(A)) \times \Hom_{A^e}(A,\Sigma^{-n}(A))$
to the trace on $KA$ of the $A^e$-endomorphism 
$$z_A^{-2} \cdot \Sigma^n(\beta)\circ\alpha$$
where we use that the projective element $z_{A^e}=z_A\ten z_A$ acts as
multiplication by $z_A^2$ on the $A^e$-module $A$. Note that $\varphi_A$
depends on the choices of $\Sigma^n(A)$ and  $\Sigma^n(\beta)$, but the 
induced map to $K/\CO$ does not. 
We have the analogous
description for $B$ instead of $A$.  

As briefly described in Section \ref{prelim}, 
the functor $\Sigma^n$ on $\modbar(A^e)$ preserves the full subcategory
$\perfbar(A)$ of perfect  $A$-$A$-bimodules. Slightly more generally, the functor
$\Sigma^n$ on $\modbar(A\tenO B^\op)$ preserves $\perfbar(A,B)$.
Setting $X=$ $\Sigma^n(A)$, the
functor $\Sigma$ on $\modbar(A^e)$ restricted to $\perfbar(A)$ is canonically 
 isomorphic to the functor
induced by any of the two exact functors $X\tenA -$ and $-\tenA X$ on $\perf(A)$,
and the functor $\Sigma^n$ on $\modbar(A\tenO B^\op)$ is canonically isomorphic
to the functor induced by the exact functor $X\tenA-$ on $\perf(A,B)$. 
Similarly, setting $Y=\Sigma^n_{B^e}(B)$, the functor $\Sigma$ restricted to
$\perfbar(B)$ is canonically  isomorphic  to any of the two exact functors induced by
$Y\tenB-$ and $-\tenB Y$ on $\perf(B)$, and the functor $\Sigma^n$ on 
$\modbar(A\tenO B^\op)$ is canonically isomorphic to the functor induced
by the exact functor $-\tenB Y$ on $\perf(A,B)$. With this notation, we will
need the identification
\begin{Equation} \label{X1}
$$X\tenA M= \Sigma^n_{A\tenO B^\op}(M) = M\tenB Y, $$
\end{Equation}
in $\perfbar(A,B)$. 
Denote by 
\begin{Equation} \label{xi}
$$ \xi : X \tenA M \longrightarrow M \tenB Y$$
\end{Equation}
and $A$-$B$-bimodule homomorphism which induces the identification
in \ref{X1}. Since $\xi$ induces an isomorphism in $\perfbar(A,B)$, the kernel
and cokernel of $\xi$ are projective $A\tenO B^\op$-modules.

\begin{proof}[{Proof of Theorem \ref{Thm2}}]
Let $\zeta\in\hatHH^n(A)$, and $\tau\in \hatHH^{-n}(B)$.
Represent these classes by bimodule homomorphisms, abusively denoted
by the same letters,
$$\zeta : A \to \Sigma^n(A) = X,\ \ \ \ \ \tau : B \to \Sigma^{-n}(B).$$
Then $\Sigma^n(\tau)$ is represented by a morphism, again denoted by the
same letter,
$$\Sigma^n(\tau) : \Sigma^n(B) = Y \to B$$
where we have used the identification  $\Sigma^n(\Sigma^{-n}(B))=B$ in $\perfbar(B)$. 
We need to show that the trace on $KA$ of
$$z_A^{-2} \cdot (\Sigma^n(\tr_M(\tau))\circ \zeta)$$
is equal to the trace on $KB$ of
$$z_B^{-2} \cdot (\Sigma^n(\tau)\circ \tr_{M^\vee}(\zeta)).$$
For simplicity, all identity homomorphisms on any of the bimodules $M$, 
$M^\vee$, $X$, $Y$  are denoted $\Id$. 
The $A^e$-homomorphism $\tr_M(\tau)$ is equal to the composition
$$\xymatrix{A \ar[rr]^(.35){\epsilon_{M^\vee}} & &M\tenB B \tenB M^\vee
\ar[rr]^(.47){\Id \ten \tau \ten \Id}  & & M\tenB \Sigma^{-n}(B) \tenB M^\vee 
\ar[rr]^(.65){\Sigma^{-n}(\eta_M)}  &&  \Sigma^{-n}(A)}$$
where we have identified $M\tenB \Sigma^n(B) \tenB M^\vee$ and 
$\Sigma^n(M\tenB M^\vee)$ along a bimodule homomorphism inducing the
canonical isomorphism in the stable module category $\modbar(A^e)$. 
Thus $\Sigma^n(\tr_M(\tau))\circ \zeta$ is the composition
$$\xymatrix{A \ar[r]^(.45)\zeta& \Sigma^n(A) \ar[rr]^(.42){\Sigma^n(\epsilon_{M^\vee})} 
 &&\Sigma^n(M \tenB M^\vee)
\ar[rrr]^(.53){\Id \ten \Sigma^n(\tau) \ten \Id}  && & M\tenB  M^\vee 
\ar[rr]^(.58){\eta_M}  &  & A}$$
where in the third term we use the identification $\Sigma^n(M\tenB M^\vee)=$
$M\tenB \Sigma^n(B) \tenB M^\vee$ in $\modbar(A^e)$, and in the fourth term
we identify $M\tenB M^\vee=$ $M\tenB B\tenB M^\vee$. In terms of the bimodules
$X$ and $Y$, as well as replacing $\Sigma^n$ by $X\tenA-$ as appropriate,
this shows that  $\Sigma^n(\tr_M(\tau))\circ \zeta$ is represented by the composition
of morphisms in $\perf(A)$
\begin{Equation} \label{X1A}
$$\xymatrix{A \ar[r]^\zeta & X \ar[r]^(.30){\Id \ten\epsilon_{M^\vee}} &  
X\tenA M\tenB M^\vee \ar[r]^{\xi\ten\Id} & 
M\tenB Y\tenB M^\vee \ar[rr]^(.55){\Id\ten\Sigma^n(\tau)\ten\Id} & & 
M\tenB M^\vee \ar[r]^(.65){\eta_M}  & A}$$
\end{Equation}
Similarly,  the map 
$\Sigma^n(\tau)\circ \tr_{M^\vee}(\zeta)$ is represented by the composition
\begin{Equation} \label{Y1B}
$$\xymatrix{ B \ar[r]^(.40){\epsilon_M} &  M^\vee\tenA M \ar[rr]^(.45){\Id\ten\zeta\ten\Id}
& & M^\vee \tenA X \tenA M \ar[r]^{\Id\ten\xi} & M^\vee \tenA M\tenB Y  
\ar[r]^(.65){\eta_{M^\vee}}  & Y \ar[r]^{\Sigma^n(\tau)} & B}$$
\end{Equation}

Since we need to compare traces of endomorphisms of $KA$, $KB$, we may
extend coefficients to any field extension of $K$. Relative projective
elements are compatible with these coefficient extensions, and hence we may assume
that $\CO=K$ is a splitting field of $A$ and $B$. Thus we may assume  that $A$, $B$ are
finite direct products of matrix algebras over $K$. What we need to show is that
the traces of the two maps in \ref{X1A} and \ref{Y1B} multiplied by $z_A^{-1}$ and
$z_B^{-2}$, respectively, are equal.
Since traces are additive and  all maps above (in particular, the
adjunction maps, hence relative projective elements) are compatible with the 
block decompositions of $A$, $B$,
we may assume that $A=$ $\End_K(U)$ and $B=\End_K(V)$ for some
finite-dimensional $K$-vector spaces $U$, $V$. The adjunction  maps are
also  additive in $M$, so we may assume that $M=U\tenK V^\vee$ (this is,
up to isomorphism, the unique finite-dimensional  indecomposable 
$A$-$B$-bimodule).  The result follows from Proposition
\ref{Tate-matrix-2} with $\Sigma^n(\tau)$ instead of $\sigma$.
\end{proof}

\section{Remarks on transfer for Calabi-Yau triangulated categories} \label{CY-section}

There are two ways to associate transfer maps to a triangle functor
$\CF : \CC\to \CD$ of Calabi-Yau categories: either by making use of a
biadjoint functor $\CG$ (if there is such a functor, as described in 
many sources such as \cite{Chouinard}, \cite{Ligrblock}), or by making use of Serre duality.
Theorem \ref{Thm1} and \cite[Theorem 1.1]{LiHHtt} state that
for stable categories of symmetric algebras over fields or complete discrete
valuation rings  these two constructions  coincide.

\medskip
To be more precise, let
$\CC$, $\CD$ be triangulated categories over a field $k$. We use the same letter
$\Sigma$ for the shift functors in $\CC$ and $\CD$.  Suppose that 
homomorphism spaces between objects in either category  are finite-dimensional
and that $\CC$, $\CD$ admit Serre functors $\bS$, $\bT$, respectively.
Let $\CF : \CC\to \CD$ be a $k$-linear functor. Let $X$, $Y$ be objects in $\CC$.
Dualising the map
$$\Hom_\CC(Y,X) \to \Hom_\CD(\CF(Y), \CF(X))$$
induced by $\CF$, and making use of the defining property of a Serre functor,
yields a map
$$\tr_{\CC, \CD} : \Hom_\CD(\CF(X), \bT(\CF(Y))) \to \Hom_\CC(X, \bS(Y)) .$$
which makes the following diagram commutative.
\begin{Equation}\label{Serre-diagram}
$$\xymatrix{\Hom_\CD(\CF(X), \bT(\CF(Y))) \ar[rr]^{\tr_{\CC,\CD}} \ar[d]_{\cong}  & 
& \Hom_\CC(X,\bS(Y)) \ar[d]^{\cong} \\
\Hom_\CD(\CF(Y), \CF(X)))^\vee \ar[rr]_{\CF^\vee} & 
& \Hom_\CC(Y,  X)^\vee} $$
\end{Equation}
where $\CF^\vee$ is the dual of the map induces by $\CF$ on morphisms, and
where the vertical isomorphism are Serre duality isomorphisms.

\medskip
Assume now that  $\CC$ and $\CD$ are $d$-Calabi-Yau triangulated categories
 for some integer $d$. That is, the Serre functors  $\bS$, $\bT$ are isomorphic to 
 $\Sigma^d$ on $\CC$, $\CD$, respectively
(see Kontsevich  \cite{Konts98} or also Keller \cite{KellerCY} for background 
material and a long list  of references on Calabi-Yau triangulated categories - what 
we call Calabi-Yau would be called weakly Calabi-Yau in many sources).
Then the previous map $\tr_{\CC,\CD}$  takes the form
\begin{Equation} \label{trFvee-map}
$$\tr_{\CF^\vee} : \Ext^d_\CD(\CF(X), \CF(Y))=\Hom_\CD(\CF(X), \Sigma^d(\CF(Y))) \to
 \Hom_\CC(X, \Sigma^d(Y)) = \Ext^d_\CC(X,Y).$$
 \end{Equation}
If $\CF$ is a functor of triangulated categories, then the functors
$\Sigma\circ\CF$ and $\CF\circ\Sigma$ are isomorphic, so upon replacing $Y$ by
$\Sigma^{n-d}(Y)$, where $n$ is an integer, we get in particular  a map
\begin{Equation} \label{trFvee-map-n-d}
$$\tr_{\CF^\vee} : \Ext^n_\CD(\CF(X), \CF(Y))=\Hom_\CD(\CF(X), \Sigma^n(\CF(Y))) \to
 \Hom_\CC(X, \Sigma^n(Y)) = \Ext^n_\CC(X,Y).$$
 \end{Equation}
For $n=0$ this yields a map
\begin{Equation} \label{trFvee-0}
$$\tr_{\CF^\vee} : \Hom_\CD(\CF(X), \CF(Y)) \to \Hom_\CC(X,Y).$$
\end{Equation}
Combining the above, the map
$\tr_{\CF^\vee}$ makes the following diagram commutative:
\begin{Equation}\label{CY-diagram}
$$\xymatrix{\Hom_\CD(\CF(X), \CF(Y)) \ar[rr]^{\tr_{\CF^\vee}} \ar[d]_{\cong}  & 
& \Hom_\CC(X,Y) \ar[dd]^{\cong} \\
\Hom_\CD(\CF(Y),\Sigma^d(\CF(X)))^\vee \ar[d]_{\cong} & & \\
\Hom_\CD(\CF(Y), \CF(\Sigma^d(X)))^\vee \ar[rr]_{\CF^\vee} & 
& \Hom_\CC(Y, \Sigma^d(X))^\vee} $$
\end{Equation}
Here the vertical isomorphisms are given by a choice of Calabi-Yau duality 
together with a choice of an isomorphism $\Sigma\circ\CF\cong$ 
$\CF\circ\Sigma$,  and the bottom horizontal is the dual of the map 
induced by $\CF$. (Note the dependence on choices.)

If $\CF$ has a biadjoint functor of triangulated categories $\CG  : \CD\to \CC$,
then (following e. g. \cite{Chouinard} or \cite[\S 4]{Ligrblock}) we also have a transfer map 
\begin{Equation}\label{transfer-map}
$$\tr_\CG : \Hom_\CD(\CF(X),\CF(Y)) \longrightarrow \Hom_\CC(X,Y)$$
\end{Equation}
sending a morphism $\psi : \CF(X)\to \CF(Y)$ to the composition of
morphisms 
$$\xymatrix{X \ar[r] & \CG(\CF(X)) \ar[r]^{\CG(\psi)} & \CG(\CF(Y)) \ar[r] & Y},$$
where the first map is the adjunction unit of $\CF$ being left adjoint to $\CG$,
and the last map is the adjunction counit of $\CF$ being right adjoint to $\CG$.
The map $\tr_\CG$ depends on the choice of adjunction isomorphisms. 

\begin{Remark}
It would be desirable to spell out the exact compatibility conditions  for adjunction
isomorphisms and Calabi-Yau duality that would lead to an equality of the
maps $\tr_{\CF^\vee}=\tr_\CG$ in the equations \ref{trFvee-0} and \ref{transfer-map}.
\end{Remark}

One can rephrase  \cite[Theorem 1.2]{LiHHtt} as stating
that $\tr_{\CF^\vee}=\tr_\CG$. The proof amounts to showing that the choices 
made for adjunction isomorphisms
and Tate duality determined by the choices of symmetrising forms are compatible.
Indeed,  if $A$ is a  symmetric
$k$-algebra, then  Tate duality turns $\modbar(A)$ into a $(-1)$-Calabi-Yau 
triangulated category.
Given two symmetric $k$-algebras $A$, $B$ and an $A$-$B$-bimodule $M$ in
$\perf(A,B)$,  
 Theorem \cite[Theorem 1.2]{LiHHtt}  can be rephrased as stating
 that the transfer maps $\tr_M(U,V)$ are special cases of the construction 
of $\tr_{\CF^\vee}$ in the above diagram \ref{CY-diagram}; in other words, the
maps in \ref{trFvee-0} and \ref{transfer-map} coincide. 

\medskip
Let now $A$ be a symmetric algebra over a complete discrete valuation ring $\CO$ with
a field of fractions $K$ of characteristic $0$ such that $K\tenO A$ is semisimple.
Extend the notion of Calabi-Yau triangulated
categories to $\CO$-linear triangulated categories by using duality with respect to 
the Matlis module $K/\CO$.  Note that $K/\CO$ is the degree $1$-term of the
injective resolution $K\to K/\CO$ of $\CO$. Note further that there are no 
nonzero $\CO$-linear maps from the torsion $\CO$-modules $\Hombar_A(U,V)$ to 
the torsion free $\CO$-module $K$ (where $U$, $V$ are $A$-lattices),
and hence Matlis duality coincides with $\RHom(-, \CO)$ on the morphism spaces
in the stable module category, except  for a degree shift. 

\begin{Remark}
Tate duality on the stable module category 
$\underline{\mathrm{latt}}(A)$ of
finitely generated $\CO$-free $A$-modules as described in \cite[Theorem 1.3]{EGKL}
seems to suggest that $\underline{\mathrm{latt}}(A)$ should be called $0$-Calabi-Yau. 
If, however, one were to take  into account that $K/\CO$ is in degree $1$, then the total 
degree of Tate duality $\Hombar_A(U,V)\times \Hombar_A(V,U)\to K/\CO$ would again 
be $-1$.
\end{Remark}

Regardless of dimension considerations, Theorem \ref{Thm1} shows that the
transfer maps $\tr_M(U,V)$ in Theorem \ref{Thm1} 
 are special cases of the construction given by the diagram \ref{CY-diagram} with
 $k$-duality replaced by $K/\CO$-duality.  As pointed out earlier, there are choices
to be made: Tate duality depends on the choices of symmetrising forms, and showing
that the maps from \ref{trFvee-0} and \ref{transfer-map} are equal in Theorem
\ref{Thm1} boils down to being able to make compatible choices.

\begin{Remark}
In the context of stable module categories of symmetric algebras 
there is always a canonical choice for the commutation with shift functors.
More precisely, given two symmetric $\CO$-algebras $A$, $B$  
and an $A$-$B$-bimodule $M$ in $\perf(A,B)$, then
the  functor $M\tenB-$ from $\mod(B)$ to $\mod(A)$ is exact, preserves projectives,
hence preserves projective resolutions, and therefore induces a canonical isomorphism
$\CF\circ\Sigma\cong\Sigma\circ\CF$, where $\CF : \underline{\mathrm{latt}}(B)\to$
$\underline{\mathrm{latt}}(A)$ is the functor induced by $M\tenB-$.
\end{Remark}

\section{On products in negative degrees of Tate cohomology}

Nonzero products in negative degree in Tate cohomology have implications for the
depth of the nonnegative part. This phenomenon was  first observed in \cite{BeCaTate}
in finite group cohomology, and then generalised in \cite{BJO}, \cite[\S 8]{LiHHtt}.
This arises  over complete discrete valuation rings as well, with essentially the
same arguments used in \cite{BeCaTate}.

Let $A$ be a symmetric algebra over a complete discrete valuation ring $\CO$ with
a field of fractions $K$ of characteristic zero. Let $U$, $V$, $W$ be finitely
generated $A$-modules. Let $m$, $n$ be integers.
 Let $\alpha\in\hatExt^m_A(U,V)$, $\beta\in\hatExt^n_A(V,W)$,
and $\gamma\in$ $\hatExt^{-m-n}_A(W,U)$. We denote by $\beta\alpha\in$
$\hatExt^{m+n}_A(U,W)$ the Yoneda product; that is, $\beta\alpha$ is represented
by $\Sigma^m(\beta)\circ\alpha$, where we use the same letters $\alpha$, $\beta$
for representatives of their classes in $\Hom_A(U,\Sigma^m(V))$, $\Hom_A(V,\Sigma^n(W))$. 
We have
\begin{Equation} \label{Tateassociative}
$$\langle \beta\alpha, \gamma\rangle_A = \langle \alpha, \gamma\beta\rangle_A$$
\end{Equation}
because both sides are equal to the image in $K/\CO$ of the  trace on $KU$ of the
endomorphism
$z_A^{-1}\cdot(\Sigma^{m+n}(\gamma)\circ \Sigma^m(\beta)\circ\alpha)$ of $KU$, 
where as before
we use abusively the same letters $\alpha$, $\beta$, $\gamma$ for representatives
in $\Hom_A(U,\Sigma^m(V))$, $\Hom_A(V,\Sigma^n(W))$, $\Hom_A(W, \Sigma^{-m-n}(U))$
of their classes.
Applied with $U=W$ and $\gamma=\Id_U$  this yields
\begin{Equation} \label{Tateassociative-1}
$$\langle \beta\alpha, \Id_U\rangle_A = \langle \alpha, \beta\rangle_A.$$
\end{Equation}

\begin{Lemma} \label{negTate1}
Let $\zeta$ be nonzero element in $\hatExt^n_A(U,V)$. Then there is a nonzero
element $\eta$ in $\hatExt^{-n}_A(V,U)$ such that the Yoneda product $\zeta\eta$
is nonzero in $\hatExt^0_A(U,U)=\Endbar_A(U)$.
\end{Lemma}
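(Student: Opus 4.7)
The plan is to derive the lemma as a direct consequence of two tools already available in the paper: the non-degeneracy of Tate duality in degree $n$ (see \ref{Tatedegree0-3}) and the associativity-type identity (\ref{Tateassociative-1}). Since $\zeta$ is a nonzero element of $\hatExt^n_A(U,V)$, the non-degeneracy of the $K/\CO$-valued pairing
$$\langle -,-\rangle_A : \hatExt^n_A(U,V) \times \hatExt^{-n}_A(V,U) \longrightarrow K/\CO$$
immediately produces a class $\eta \in \hatExt^{-n}_A(V,U)$ with $\langle \zeta, \eta\rangle_A \neq 0$. In particular, this forces $\eta \neq 0$.

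It then remains to transfer this non-vanishing to the Yoneda product. Specialising the identity (\ref{Tateassociative-1}) to $\alpha = \zeta$ and $\beta = \eta$ — so that the resulting Yoneda product lies in $\hatExt^0_A(U,U) = \Endbar_A(U)$ — yields
$$\langle \eta\zeta,\, \Id_U\rangle_A \;=\; \langle \zeta, \eta\rangle_A \;\neq\; 0.$$
Since the pairing vanishes whenever its first entry is zero, this forces the Yoneda product to be nonzero in $\Endbar_A(U)$, and $\eta$ is the class sought by the lemma.

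No serious obstacle arises: the whole argument is a formal consequence of the two ingredients cited, and the $\eta$ is in fact produced by pure duality before any product is ever formed. The only point requiring care is bookkeeping with the convention for the Yoneda product introduced above (\ref{Tateassociative}), namely $\beta\alpha = \Sigma^m(\beta)\circ\alpha$. Under this convention the endomorphism of $U$ sitting in $\hatExt^0_A(U,U)$ is represented by the composite $\Sigma^n(\eta)\circ \zeta : U \to \Sigma^n\Sigma^{-n}(U) = U$, so one must be careful to form the product in that order rather than the opposite one (which would land in $\hatExt^0_A(V,V)$ instead).
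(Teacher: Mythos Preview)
Your proof is correct and follows exactly the same route as the paper: choose $\eta$ by non-degeneracy of Tate duality so that $\langle \zeta,\eta\rangle_A\neq 0$, then invoke \ref{Tateassociative-1} to deduce $\langle \eta\zeta,\Id_U\rangle_A\neq 0$ and hence $\eta\zeta\neq 0$. Your closing remark about the order of the Yoneda product is well taken; the paper's own proof likewise establishes $\eta\zeta\neq 0$ in $\Endbar_A(U)$ (the product as written in the lemma statement, $\zeta\eta$, would land in $\Endbar_A(V)$).
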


\begin{proof}
By Tate duality, there is $\eta\in$ $\hatExt^{-n}_A(V,U)$ such that 
$\langle \zeta, \eta\rangle_A\neq$ $0$. By \ref{Tateassociative-1} we have
$\langle \eta\zeta, \Id_U\rangle\neq0$, and hence $\eta\zeta\neq 0$ and
$\eta\neq 0$.
\end{proof}

\begin{Remark} \label{depth1Remark}
As in \cite[\S 8]{LiHHtt}, we denote by $\barExt^*(U,U)$ the nonnegative part of 
$\hatExt^*_A(U,U)$. Adapting the arguments from \cite{BeCaTate}, as reproduced in
the proof of \cite[Proposition 8.3]{LiHHtt}, shows that if $\hatExt^*_A(U,U)$ has a
nonzero product of two homogeneous elements in negative degrees, and if
$\barExt^*_A(U,U)$ is graded-commutative, then $\barExt^*_A(U,U)$ does not
have a regular sequence of length $2$. In particular, if $\hatHH^*(A)$ has a
nonzero product of two homogeneous elements in negative degrees, then its
nonnegative part $\barHH^*(A)$ does not have a regular sequence of length $2$.
\end{Remark}


\end{document}